\documentclass{amsart}  %used for final

%packages
\usepackage[latin1]{inputenc}
\usepackage{amsmath} 
\usepackage{amsfonts}
\usepackage{amssymb}
\usepackage{stmaryrd}
\usepackage{latexsym} 
\usepackage{graphicx}
\usepackage{subfigure}
\usepackage{color}
\usepackage{verbatim}
\usepackage[all]{xy}
\usepackage{graphics}
\usepackage{pdfsync}
\usepackage[backref=page]{hyperref}
\usepackage{ytableau}

% 1in margins at left and right
\oddsidemargin=0in
\evensidemargin=0in
\textwidth=6.50in             % paper is 8.50in wide

% 1in margins at top and bottom
\headheight=10pt
\headsep=10pt
\topmargin=.5in
\textheight=8in

%% Theorem definitions
\theoremstyle{plain}
\newtheorem{theorem}{Theorem}[section]

\newtheorem{lemma}[theorem]{Lemma}
\newtheorem{corollary}[theorem]{Corollary}

\newtheorem{example}[theorem]{Example}

\theoremstyle{remark}
\newtheorem{remark}[theorem]{Remark}

\numberwithin{equation}{section}
 % A typesetting pun?!
\setcounter{MaxMatrixCols}{20}

\newcommand{\suchthat}{\;|\;}

    % for skew composition shapes
    % for skew composition shapes
    % for skew composition shapes
% for skew composition shapes
% for checked skew composition shapes
% for Young skew composition shapes

    % for permutations and skew SCT
    % for permutations and skew SYCT

% sets of tableaux%

 %Left LR tableaux
 % Right LR tableaux

 %Left LR composition tableaux
% Right LR composition tableaux

%%% tableau macros %%%
\newlength\cellsize \setlength\cellsize{15\unitlength}
\savebox2{%
\begin{picture}(15,15)
\put(0,0){\line(1,0){15}}
\put(0,0){\line(0,1){15}}
\put(15,0){\line(0,1){15}}
\put(0,15){\line(1,0){15}}
\end{picture}}
\newcommand\cellify[1]{\def\thearg{#1}\def\nothing{}%
\ifx\thearg\nothing
\vrule width0pt height\cellsize depth0pt\else
\hbox to 0pt{\usebox2\hss}\fi%
\vbox to 15\unitlength{
\vss
\hbox to 15\unitlength{\hss$#1$\hss}
\vss}}
\newcommand\tableau[1]{\vtop{\let\\=\cr
\setlength\baselineskip{-16000pt}
\setlength\lineskiplimit{16000pt}
\setlength\lineskip{0pt}
\halign{&\cellify{##}\cr#1\crcr}}}
\savebox3{%
\begin{picture}(15,15)
\put(0,0){\line(1,0){15}}
\put(0,0){\line(0,1){15}}
\put(15,0){\line(0,1){15}}
\put(0,15){\line(1,0){15}}
\end{picture}}
\newcommand\expath[1]{%
\hbox to 0pt{\usebox3\hss}%
\vbox to 15\unitlength{
\vss
\hbox to 15\unitlength{\hss$#1$\hss}
\vss}}
\newcommand\bas[1]{\omit \vbox to \cellsize{ \vss \hbox to \cellsize{\hss$#1$\hss} \vss}}

%\graphicspath{{pics/}}

% stuff added for this paper-Vasu
 \usepackage{chessfss}
\usepackage{epstopdf}

\usepackage{tikz}
\usetikzlibrary{arrows,petri,topaths}
\usepackage{tkz-berge}
\usepackage{xcolor}
\usepackage{tikz-qtree}
\usetikzlibrary{positioning,arrows,calc,intersections,shapes,decorations}

% set associated with combinatorial objects
\newcommand{\Asc}{\mathrm{Asc}} % ascent set
\newcommand{\Dsc}{\mathrm{Des}} % descent set
\newcommand{\Exc}{\mathrm{Exc}}  % strong excedance set
\newcommand{\nExc}{\mathrm{nExc}} % not a strong excedance set

% statistics associated with combinatorial objects

\newcommand{\asc}{\mathrm{asc}} % number of ascents
\newcommand{\dsc}{\mathrm{des}} % number of descents
\newcommand{\exc}{\mathrm{exc}}  % number of strong excedance
\newcommand{\nexc}{\mathrm{nexc}} % number of not a strong excedance
\newcommand{\sub}{\mathrm{sub}} % subcedances

% common combinatorial numbers
 % Stirling number of the second kind
 % Narayana numbers
 % shi arrangement

% set of rook placements
\newcommand{\Func}{\mathcal{F}}
\newcommand{\wtFunc}{\widetilde{\Func}}

% set of folded rook placement
\newcommand{\FFunc}{\mathcal{FF}}

% varieties of trees
\newcommand{\tree}{\mathcal{T}} % rooted labeled trees
\newcommand{\ptree}{\mathcal{PT}} % rooted labeled plane k-ary trees
 % increasing trees
\newcommand{\iptree}{\mathcal{IPT}} % increasing k-ary trees
\newcommand{\dtree}{\mathcal{DT}}  %decorated rooted labeled trees
\newcommand{\ltree}{\mathcal{LST}} % local search trees
\newcommand{\riptree}{\mathcal{RPT}} %right increasing plane tree

\newcommand{\spine}{\mathrm{spine}} % spine of tree
\newcommand{\rec}{\mathrm{rec}}% records of a tree

%%%%%%%%%%%%%%%%%%% hyperplane arrangements
\newcommand{\hypcat}{\mathcal{C}} % Catalan
\newcommand{\hyplin}{\mathcal{L}} % Linial
 % Braid
\newcommand{\hypshi}{\mathcal{S}} % Shi

%%%%%%%%%%%%%%%%%%%%%%%%%%%%
%%%%%%%%%%% alphabet
\newcommand{\alpu}{\mathsf{u}}
\newcommand{\alpv}{\mathsf{v}}

\begin{document}

\title[Gessel polynomials and extended Linial arrangements]{Gessel polynomials, rooks, and extended Linial arrangements}

\author{Vasu Tewari}
\address{Department of Mathematics, University of Washington, Seattle, WA 98195, USA}
\email{\href{mailto:vasut@math.washington.edu}{vasut@math.washington.edu}}

%\subjclass[2010]{Primary 05E05, 20C08; Secondary 05A05, 05E10, 05E15, 06A07, 16T05, 20C30}
\keywords{characteristic polynomial, descent, excedance, hyperplane arrangements, trees, rook placements}

\begin{abstract}
We study a family of polynomials associated with ascent-descent statistics on labeled rooted plane $k$-ary trees introduced by Gessel, from a rook-theoretic perspective. We generalize the excedance statistic on permutations to maximal nonattacking rook placements on certain rectangular boards by decomposing them into boards of staircase shape. We then relate the number of maximal nonattacking rook placements on certain skew boards to the number of regions in extended Linial arrangements by establishing a relation between the factorial polynomial of those boards to the characteristic polynomial of extended Linial arrangements. Furthermore, we give a combinatorial interpretation of the number of bounded regions in extended Linial arrangements in the setting of labeled rooted plane $k$-ary trees. Finally, using the work of Goldman-Joichi-White, we identify graphs whose chromatic polynomials equal the characteristic polynomials of extended Linial arrangements upto a straightforward normalization.
\end{abstract}

\maketitle
%\tableofcontents

% Introduction
\section{Introduction}
This article concerns a family of multivariate polynomials associated with ascents and descents on labeled rooted plane $k$-ary trees that was introduced by Gessel \cite{Gessel-unpublished} and studied by \cite{Drake-thesis,Kalikow}. Gessel further observed that various evaluations of these polynomials counted the number of regions in well-known deformations of Coxeter hyperplane arrangements, and asked for a bijective/combinatorial explanation \cite{Gessel-Oberwolfach}. In what follows, we describe progress in this direction. %Other work in the same direction has been done in \cite{Corteel-Forge-Ventos,Forge}.

We study this family of polynomials from a rook-theoretic perspective, drawing inspiration from recent work of Lewis-Morales \cite{Lewis-Morales} studying diagrams of permutations as well as the work of Leven-Rhoades-Wilson \cite{Leven-Rhoades-Wilson} relating rook placements and Shi-Ish arrangements.
%\cite{Sjostrand} relating rook placements on skew boards to Poincar\'e polynomials of certain intervals in the Bruhat order. %More specifically, we associate certain composition-valued statistics with nonattacking maximal rook placements of boards of size $(n-1)\times kn$ where $n$ is a positive integer, and study their joint distribution. These statistics are a natural generalization of the classical excedance and subcedance statistics associated with permutations. In fact, the case of nonattacking maximal rook placements on an $(n-1)\times n$ board coincides with the case of permutations.
%We then proceed to mention the consequences of our bijection to the study of certain deformations of the Coxeter arrangements; more specifically, the extended Linial arrangements defined by Postnikov-Stanley \cite{Postnikov-Stanley}.

To give a flavor of our main results, we establish some notation. Let $n\geq 1$. A local binary search tree on $n$ nodes is defined to be a labeled rooted plane binary tree on $n$ nodes with labels drawn from $[n]=\{1,\ldots, n\}$ such that the label of the left child (right child) is always smaller (respectively greater) than the parent. The Linial arrangement $\hyplin_{n-1}$ is the real hyperplane arrangement comprising the hyperplanes $x_i-x_j=1$ for $1\leq i<j\leq n$ considered in the ambient vector space $V_{n-1}$ consisting of points $(x_1,\ldots,x_n)\in \mathbb{R}^n$ such that $x_1+\cdots +x_n=0$.  A special case of one of our main theorems is the following result.
\begin{theorem}\label{thm: intro 1}
The number of bounded regions in the Linial arrangement $\hyplin_{n-1}$ equals the following quantities.
\begin{enumerate}
\item The number of local binary search trees on $n$ nodes such that the nodes labeled $1$ and $n$ are both leaves. Here the root node is not considered to be a leaf node.
\item The number of nonattacking rook placements of $n-1$ rooks on the skew Ferrers board of shape given by  $(2n-3,\ldots,n-1)/(n-1,\ldots,1)$.
\item The number of sequences $(x_1,\ldots, x_{n-1})$ of positive integers satisfying $1\leq x_i\leq n-2$ for $1\leq i\leq n-1$, and $x_i+i\neq x_j+j$ for $1\leq i\neq j\leq n-1$.
\end{enumerate}
\end{theorem}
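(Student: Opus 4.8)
The plan is to derive Theorem~\ref{thm: intro 1} as the specialization of the paper's general theorems to the case of plane binary trees (equivalently, to the plain Linial arrangement among the extended Linial arrangements), using the characteristic polynomial $\chi_{\hyplin_{n-1}}(t)$ as the common hub. The backbone consists of three steps. First, recall Athanasiadis's finite field computation of $\chi_{\hyplin_{n-1}}(t)$: for large primes $q$, the value $\chi_{\hyplin_{n-1}}(q)$ counts the points $(x_1,\dots,x_n)\in\mathbb{F}_q^{\,n}$, taken modulo the all-ones line, with $x_i-x_j\neq 1$ for all $i<j$, and after normalizing the coordinate $x_n$ this becomes a count of sequences in $\mathbb{F}_q^{\,n-1}$ subject to ``generic-difference'' constraints. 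Second, write the factorial polynomial of the skew board $B=(2n-3,\dots,n-1)/(n-1,\dots,1)$ in the form $\sum_{j}r_j(B)\,(t)_{n-1-j}$, so that the number of nonattacking placements of $n-1$ rooks on $B$ is exactly the top rook number $r_{n-1}(B)$, the coefficient of $(t)_0$. Third, identify these two polynomials up to the normalization described in the abstract; the Gessel polynomials for plane binary trees, whose specializations recover both the characteristic polynomial and the rook-count side, mediate this identification, and this is the step into which the rook-theoretic machinery of the paper feeds. Granting the third step, Zaslavsky's theorem gives $b(\hyplin_{n-1})=(-1)^{n-1}\chi_{\hyplin_{n-1}}(1)$, while the evaluation point forced by the normalization annihilates every falling factorial $(t)_{n-1-j}$ with $j<n-1$ and thus returns precisely $r_{n-1}(B)$; this shows the bounded-region count equals quantity~(2).

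The equality of quantities~(2) and~(3) is an elementary relabeling. A placement of $n-1$ nonattacking rooks on $B$ necessarily puts one rook in each of the $n-1$ rows, the rook in row $i$ occupying a column $c_i$ with $n-i+1\leq c_i\leq 2n-2-i$; setting $x_i:=2n-1-i-c_i$ yields $1\leq x_i\leq n-2$ together with $c_i=c_j\iff x_i+i=x_j+j$, so nonattacking placements biject with the sequences in~(3) (one simply reads the columns of $B$ from the right). For quantity~(1), I would use a correspondence between the regions of $\hyplin_{n-1}$ and the local binary search trees on $[n]$ and show that a region is bounded exactly when the corresponding tree has its nodes labeled $1$ and $n$ as leaves: in the paper's framework this is read off from the Gessel polynomials, whose evaluation at the ``bounded'' point counts precisely the local binary search trees carrying two extra removable leaves, which---because the removals in question occur at the two extremes of the search order---are forced to be the nodes $1$ and $n$. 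Alternatively one can argue by deletion--restriction, tracking which trees survive with the correct Zaslavsky sign as the hyperplanes are peeled off.

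I expect the principal obstacle to be the third step: the skew shape $B$ lies outside the scope of the original Goldman--Joichi--White factorization, so one must first decompose $B$ (or the ambient rectangle) into staircase pieces and use the generalized excedance statistic to obtain a product formula at all, and then verify that it matches Athanasiadis's finite-field expression term by term; carrying the full $k$-ary generalization along simultaneously compounds the bookkeeping. A secondary difficulty is proving that ``$1$ and $n$ are leaves'' is the exact combinatorial shadow of boundedness rather than merely an equinumerous condition, which seems to need either a sign-reversing involution on the local binary search trees that fail it or a careful inductive deletion--restriction argument matching Zaslavsky signs to leaf bookkeeping.
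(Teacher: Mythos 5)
Your equivalence of quantities (2) and (3) is correct and is the paper's argument (left/right-justify the rows and read off columns), and your extraction mechanism for (2) is also right: granting $R(t,B)=(-1)^{n-1}\chi_{\hyplin_{n-1}}(1-t)$, Zaslavsky at $q=1$ forces the evaluation $t=0$, which kills every falling factorial $(t)_{n-1-j}$ with $j<n-1$ and leaves $r_{n-1}(B)$. But the identity of Theorem \ref{thm: intro 2} itself is exactly what you black-box, and the route you sketch for it (decompose the skew board into staircase pieces and match term by term against Athanasiadis's finite-field count) is neither developed nor the one that works. The paper proves it through trees: the bijection $\Psi$ identifies maximal nonattacking placements on $L_{(a-1)n+2,n}$ with trees in $\ltree_{n,a+1}$ (no $1$-ascents, no $(a+1)$-descents); Drake's inversion theorem (Theorem \ref{thm: drake}) gives the closed form $\frac{1}{2^n}\sum_{j=0}^{n}\binom{n}{j}(1+(a-1)n+j)^{n-1}$ for these trees; and then the step missing from your plan: $R(t,L_{0,n})$ is a polynomial of degree $n-1$ in $t$ that agrees with this closed form at the infinitely many integers $t=(a-1)n+2$, hence coincides with it identically, after which comparison with the Postnikov--Stanley formula for $\chi_{n-1}^{a-1,a+1}$ yields Theorem \ref{thm: intro 2}. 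Without this interpolation (or an independent computation of all $r_k(L_{0,n})$), ``identify the two polynomials'' is precisely the assertion to be proved.

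The more serious gap is quantity (1). You propose to show that under a correspondence between regions of $\hyplin_{n-1}$ and local binary search trees, a region is bounded exactly when $1$ and $n$ are leaves; no such region-by-region correspondence is available (Postnikov's result, and this paper, relate regions to trees only enumeratively), the phrase ``two extra removable leaves forced to be $1$ and $n$'' has no mechanism behind it, and ``deletion--restriction tracking Zaslavsky signs'' is not an argument. The paper's proof of (1) never touches regions individually: it first gets $b(\hyplin_{n-1})=r_{n-1}(L_{0,n})$ as above, then observes that maximal placements on $L_{0,n}$ are exactly the maximal placements on $L_{2,n}$ avoiding the two full-length columns $n$ and $n+1$; under $\Psi$, a rook in column $n$ (resp.\ $n+1$) means some node is the left child of $n$ (resp.\ the right child of $1$), so avoiding both columns means $n$ has no left child and $1$ has no right child, and inside $\ltree_{n,2}$ (where $1$ cannot have a left child and $n$ cannot have a right child) this is precisely the condition that $1$ and $n$ are leaves. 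You should replace your step for (1) by this forbidden-columns argument; proving the region-level statement you propose would be a substantially stronger (and currently open) refinement that the theorem does not require.
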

A few remarks about the above theorem are in order. The question of finding a combinatorial meaning to the number of bounded regions of the Linial arrangement in the setting of labeled trees has been raised in \cite{Athanasiadis-Advances} (cf. discussion after Theorem 4.2 therein). To the best of the author's knowledge, there have not been any results in this direction.
Secondly, that local binary search trees have to do with regions in the Linial arrangement is no surprise. Indeed, Postnikov \cite{Postnikov-JCTA} proved that the number of regions in the Linial arrangement equals the number of local binary search trees (in fact, Postnikov worked with intransitive trees or alternating trees, but establishes a bijection between them and local binary search trees \cite[Section 4.1]{Postnikov-JCTA}). As the theorem above shows, the number of bounded regions in the Linial arrangement is counted by a fairly nicely described subset of the set of local binary search trees. Starting from $n=1$, the first few terms of this sequence (not present in the OEIS) are: $0$, $0$, $1$, $4$, $26$, $212$, $2108$, $24720$.

Our second main result relates the factorial polynomial of a specific skew Ferrers board to the characteristic polynomial of the Linial arrangement $\hyplin_{n-1}$.
\begin{theorem}\label{thm: intro 2}
Let $B$ denote the skew Ferrers board of shape $(2n-3,\ldots,n-1)/(n-1,\ldots,1)$.
Let $R(t,B)$ denote the factorial polynomial of $B$, and let $\chi_{\hyplin_{n-1}}(t)$ denote the characteristic polynomial of the arrangement $\hyplin_{n-1}$. Then we have the following equality.
\begin{align*}
R(t,B)=(-1)^{n-1}\chi_{\hyplin_{n-1}}(1-t)
\end{align*}
\end{theorem}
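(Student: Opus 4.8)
The plan is to establish the identity as an equality of polynomials by verifying that it holds at every positive integer $m$. Both sides are monic polynomials in $t$ of degree $n-1$: the characteristic polynomial $\chi_{\hyplin_{n-1}}$ is monic of degree $\dim V_{n-1}=n-1$, so $(-1)^{n-1}\chi_{\hyplin_{n-1}}(1-t)$ is monic of degree $n-1$; and the factorial polynomial $R(t,B)$ is monic of degree $n-1$ because $B$ has exactly $n-1$ rows and its leading term is contributed by the summand corresponding to zero rooks, with rook number $r_0(B)=1$. Consequently it suffices to prove
\[
R(m,B)=(-1)^{n-1}\chi_{\hyplin_{n-1}}(1-m)\qquad\text{for every positive integer }m.
\]

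For the left-hand side I would invoke the combinatorial reading of the factorial polynomial due to Goldman, Joichi and White. Since $B$ has $n-1$ rows, for a positive integer $m$ the value $R(m,B)$ equals the number of nonattacking placements of $n-1$ rooks on the board $B_m$ obtained from $B$ by adjoining $m$ columns of full height $n-1$; this follows by classifying a placement on $B_m$ according to which rows carry their rook in the adjoined columns, the remaining rooks forming a nonattacking placement on $B$. Concretely, the cells of row $i$ of $B$ occupy columns $n-i+1,\dots,2n-2-i$, so a placement on $B_m$ is a sequence $(c_1,\dots,c_{n-1})$ of distinct column labels with each $c_i$ lying either in the adjoined block $\{1,\dots,m\}$ or in the shifted interval $\{m+n-i+1,\dots,m+2n-2-i\}$; call the set of such sequences $X_m$, so that the left-hand side equals $|X_m|$.

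For the right-hand side I would use the finite field method: for all sufficiently large primes $p$,
\[
\chi_{\hyplin_{n-1}}(p)=\frac1p\,\#\bigl\{(a_1,\dots,a_n)\in\mathbb{F}_p^{\,n}\ :\ a_i-a_j\neq 1\ \text{for }1\le i<j\le n\bigr\}.
\]
Performing the standard count for this set, as in Postnikov and Stanley's treatment of truncated affine arrangements, yields $\chi_{\hyplin_{n-1}}(q)=2^{-n}\sum_{k=0}^{n}\binom nk (q-k)^{n-1}$. Setting $q=1-m$ and using $(1-m-k)^{n-1}=(-1)^{n-1}(m+k-1)^{n-1}$ gives
\[
(-1)^{n-1}\chi_{\hyplin_{n-1}}(1-m)=\frac1{2^n}\sum_{k=0}^{n}\binom nk (m+k-1)^{n-1}=\frac1{2^n}\,\#\bigl\{(S,g):S\subseteq[n],\ g\colon[n-1]\to[m+|S|-1]\bigr\},
\]
which I denote $2^{-n}|Y_m|$.

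It remains to prove $2^{n}|X_m|=|Y_m|$ for every positive integer $m$, and this is the crux. I would pair each rook placement in $X_m$ with an arbitrary subset $U\subseteq[n]$ and construct an explicit bijection — or a sign-reversing involution collapsing both sides to a common subcount — onto the pairs $(S,g)$ in $Y_m$; the real content is to trade the parallelogram geometry of $B$, namely the sliding of the board part of row $i$, together with the $n$ coordinates underlying the Linial arrangement, for the free target range $[m+|S|-1]$ and the factor $2^n$. This is exactly where the decomposition of $B$ into boards of staircase shape and the associated excedance-type statistic are brought to bear, and I expect this last matching — the skew rook count against the subset-weighted power sum, in the spirit of Postnikov and Stanley's bijective proof of their formula — to be the main obstacle; the degree reduction and the two interpretations above are comparatively routine. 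If one prefers to sidestep the bijection, one can instead compute the rook numbers $r_k(B)$ of the parallelogram board by a transfer-matrix recursion along the diagonals $c_i+i$ and match the resulting generating function with the known one for $\chi_{\hyplin_{n-1}}$; the combinatorial core is unchanged. Finally, the special case $t=0$ of the resulting polynomial identity reads $r_{n-1}(B)=(-1)^{n-1}\chi_{\hyplin_{n-1}}(1)$, recovering the count of bounded regions of $\hyplin_{n-1}$ consistently with Theorem~\ref{thm: intro 1}.
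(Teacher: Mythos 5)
Your setup is fine as far as it goes: both sides are monic of degree $n-1$, the evaluation $R(m,B)$ at a positive integer $m$ counts maximal nonattacking placements on $B$ with $m$ full-height columns adjoined (this is exactly how the paper passes between $R(t,L_{0,n})$ and $r_{n-1}(L_{m,n})$), and the closed form $\chi_{\hyplin_{n-1}}(q)=2^{-n}\sum_{k=0}^{n}\binom{n}{k}(q-k)^{n-1}$ is a known result which the paper likewise imports from \cite[Eq.\ 9.11]{Postnikov-Stanley} rather than reproving. But at the step you yourself label ``the crux'' you stop. The identity $2^{n}|X_m|=|Y_m|$, i.e.
\begin{align*}
r_{n-1}(L_{m,n})=\frac{1}{2^{n}}\sum_{j=0}^{n}\binom{n}{j}(m-1+j)^{n-1},
\end{align*}
\emph{is} the substance of the theorem, and neither the proposed bijection or sign-reversing involution nor the transfer-matrix alternative is carried out; saying that the staircase decomposition and the excedance statistic are ``brought to bear'' here is a placeholder, not an argument. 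Indeed, no simple direct bijection of this kind is on record (Gessel's question asks precisely for such combinatorial explanations), so the deferred step cannot be treated as routine.

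For comparison, the paper closes exactly this gap by a detour through trees: the bijection $\Psi$ identifies maximal nonattacking rook placements on $L_{(a-1)n+2,n}$ with trees in $\ltree_{n,a+1}$ (no $1$-ascents, no $(a+1)$-descents); Drake's compositional-inverse theorem (Theorem \ref{thm: drake}, \cite{Drake-thesis}) gives the exponential generating function of $|\ltree_{n,k}|$, whose functional equation $f^{2}=e^{x(f^{k-2}+f^{k-1})}$ is the one solved in \cite[Theorem 9.3]{Postnikov-Stanley}, yielding the closed form $\frac{1}{2^{n}}\sum_{j}\binom{n}{j}(1+(k-2)n+j)^{n-1}$. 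This proves the displayed identity for the infinitely many values $m=(a-1)n+2$, and polynomiality in $t$ (Theorem \ref{thm: factorial rook polynomial Linial board}) then gives $R(t,L_{0,n})=\frac{1}{2^{n}}\sum_{j}\binom{n}{j}(t-1+j)^{n-1}$ for all $t$; comparison with the known expression for $\chi_{n-1}^{0,2}$ (Corollary \ref{cor: relation factorial and characteristic} with $a=1$) finishes the proof. So your outline is compatible with the paper's strategy, but the one step you postpone is the one that requires the paper's main new ingredients; to complete a proof along your lines you must either supply the matching between $2^{n}$-weighted skew rook placements and the pairs $(S,g)$, or route through the tree bijection and the generating-function identity as the paper does.
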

The outline of this article is as follows: We begin by defining notions related to hyperplane arrangements in Section \ref{sec: hyp arrangements}. We follow it up by defining rook-theoretic notions in Section \ref{sec:rook placements}. We also introduce certain special rectangular boards and define generalized excedance and subcedance statistics associated with maximal nonattacking rook placements on these boards. In Section \ref{sec:trees}, we introduce labeled rooted plane $k$-ary trees amongst other notions. We then define ascent-descent statistic on these trees and proceed to give our bijection $\Psi$ that takes maximal nonattacking rook placements on $(n-1)\times kn$ boards to labeled rooted plane $k$-ary trees on $[n]$. This bijection further has the property that it takes the $(\exc, \sub)$ associated with the rook placement to the $(\asc,\dsc)$ pair associated with the corresponding tree. This given, we define the Gessel polynomials. We use our bijection  to connect rook placements on certain skew Ferrers boards with the number of regions in extended Linial arrangements. In Section \ref{sec: chromatic poly} we identify graphs whose chromatic polynomials are related to characteristic polynomials of extended Linial arrangements. Finally, we conclude with a  brief discussion on future avenues in Section \ref{sec:final}.

\section{Hyperplane arrangements}\label{sec: hyp arrangements}
We begin by defining some basic notions concerning hyperplane arrangements. Our exposition will be brief; the reader is referred to \cite{Orlik-Terao,Stanley-notes} for more details. We will be faithful to the terminology used in \cite{Postnikov-Stanley} to a large extent.

A hyperplane arrangement is a finite collection of affine hyperplanes in a vector space. Given a finite-dimensional vector space $V$ over $\mathbb{R}$, let $\mathcal{A}$ be a hyperplane arrangement therein. We will denote the number of regions of $\mathcal{A}$, that is, the number of connected components of $V-\cup_{H\in \mathcal{A}}H$, by $r(\mathcal{A})$. Furthermore, the number of relatively bounded regions will be denoted by $b(\mathcal{A})$.

The poset that connects the study of hyperplane arrangements to combinatorics is the \emph{intersection poset } $L_{\mathcal{A}}$ associated with a hyperplane arrangement $\mathcal{A}$. 
It consists of all nonempty intersections $H_{i_1}\cap \cdots \cap H_{i_k}$ of hyperplanes in $A$, ordered by reverse inclusion. Clearly, the unique  minimum element $\hat{0}$ in $L_{\mathcal{A}}$ is the ambient vector space $V$. Given $L_{\mathcal{A}}$, we can define the \emph{characteristic polynomial} of $\mathcal{A}$, denoted by $\chi_{\mathcal{A}}(q)$, as shown below.
\begin{align*}
\chi_{\mathcal{A}}(q)=\sum_{z\in L_{\mathcal{A}}}\mu(\hat{0},z)q^{\dim(z)}
\end{align*}
Here $\mu$ denotes the M\"obius function of the poset $L_\mathcal{A}$.

Another polynomial encoding topological information about the hyperplane arrangement $\mathcal{A}$ is the \emph{Poincar\'e polynomial} $\mathrm{Poin}_{A}(q)$ defined below.
\begin{align*}
\mathrm{Poin}_{\mathcal{A}}(q)=\sum_{m\geq 0}\dim H^{k}(C_{\mathcal{A}};\mathbb{C})q^k
\end{align*}
Here $C_{\mathcal{A}}$ denotes the complement of the complexified arrangement $\mathcal{A}_{\mathbb{C}}$ in the complexified vector space $V_{\mathbb{C}}$. Furthermore, $H^{k}(\text{ };\mathbb{C})$ denotes the singular cohomology with coefficients belonging to $\mathbb{C}$. The Poincar\'e polynomial $\mathrm{Poin}_{\mathcal{A}}(q)$ is a $q$-analogue of $r(\mathcal{A})$ as is made precise by the following theorem of Orlik-Solomon \cite{Orlik-Solomon-Inventiones}.
\begin{theorem}
We have that $r(\mathcal{A})=\mathrm{Poin}_{\mathcal{A}}(1)$ and $b(\mathcal{A})=\mathrm{Poin}_{\mathcal{A}}(-1)$.
\end{theorem}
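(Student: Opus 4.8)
The plan is to route both identities through the characteristic polynomial $\chi_{\mathcal{A}}$, which already appears in the excerpt and which serves as the common combinatorial bridge between the topology of $C_{\mathcal{A}}$ and the region counts $r(\mathcal{A})$, $b(\mathcal{A})$. Concretely, I would first express $\mathrm{Poin}_{\mathcal{A}}(q)$ in terms of $\chi_{\mathcal{A}}$ (the topological input), then express $r(\mathcal{A})$ and $b(\mathcal{A})$ in terms of $\chi_{\mathcal{A}}$ (the combinatorial input), and finally specialize at $q=1$ and $q=-1$ and match the two sides.

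For the topological input, the key is the Orlik--Solomon theorem proper: the isomorphism between the cohomology ring $H^*(C_{\mathcal{A}};\mathbb{C})$ and the Orlik--Solomon algebra, an algebra defined purely from the intersection poset $L_{\mathcal{A}}$ (equivalently, the underlying matroid). The consequence I need is the Betti-number formula
\begin{align*}
\dim H^k(C_{\mathcal{A}};\mathbb{C}) = \sum_{\substack{z\in L_{\mathcal{A}}\\ \operatorname{codim}(z)=k}} |\mu(\hat{0},z)|,
\end{align*}
so that $\mathrm{Poin}_{\mathcal{A}}(q) = \sum_{z\in L_{\mathcal{A}}}|\mu(\hat{0},z)|\,q^{\operatorname{codim}(z)}$. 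Since every interval $[\hat{0},z]$ in $L_{\mathcal{A}}$ is a geometric lattice, the M\"obius function obeys the sign rule $\mu(\hat{0},z) = (-1)^{\operatorname{codim}(z)}|\mu(\hat{0},z)|$; substituting this and comparing with the definition of $\chi_{\mathcal{A}}$ yields, after collecting the signs $(-1)^{\operatorname{codim}(z)+\dim(z)}=(-1)^{\dim V}$, the clean relation $\mathrm{Poin}_{\mathcal{A}}(q) = (-q)^{\dim V}\chi_{\mathcal{A}}(-q^{-1})$.

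For the combinatorial input I would invoke Zaslavsky's theorem, $r(\mathcal{A}) = (-1)^{\dim V}\chi_{\mathcal{A}}(-1)$ and $b(\mathcal{A}) = (-1)^{\dim V}\chi_{\mathcal{A}}(1)$, proved by deletion--restriction: one checks that $\chi_{\mathcal{A}}$, $r$, and $b$ all satisfy the same three-term recursion $f(\mathcal{A}) = f(\mathcal{A}\setminus H) \pm f(\mathcal{A}^H)$ together with the obvious base case of the empty arrangement, and then inducts on $|\mathcal{A}|$. Specializing the relation from the previous paragraph at $q=1$ gives $\mathrm{Poin}_{\mathcal{A}}(1) = (-1)^{\dim V}\chi_{\mathcal{A}}(-1) = r(\mathcal{A})$, which is the first identity on the nose. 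At $q=-1$ one gets $\mathrm{Poin}_{\mathcal{A}}(-1) = \chi_{\mathcal{A}}(1) = (-1)^{\dim V}b(\mathcal{A})$; here the only point requiring care is the sign $(-1)^{\dim V}$, so that the bounded-region identity should be read as $b(\mathcal{A}) = (-1)^{\dim V}\mathrm{Poin}_{\mathcal{A}}(-1) = |\mathrm{Poin}_{\mathcal{A}}(-1)|$ (the last equality since $b(\mathcal{A})\geq 0$), which is harmless for all later applications.

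The main obstacle is entirely contained in the topological input: the Orlik--Solomon isomorphism, and in particular the fact that the complement's Betti numbers are the unsigned M\"obius numbers, is a genuinely deep result (proved via the Brieskorn decomposition together with an analysis of the Arnold/Orlik--Solomon relations among the logarithmic forms $d\log\alpha_H$). Everything else---the geometric-lattice sign rule, the deletion--restriction recursion underlying Zaslavsky's theorem, and the final substitutions---is elementary bookkeeping, the only subtlety being the dimension-parity sign in the bounded-region case noted above.
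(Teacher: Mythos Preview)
The paper does not supply its own proof of this theorem; it merely cites Orlik--Solomon and then remarks that the relation $\chi_{\mathcal{A}}(q)=q^{d}\,\mathrm{Poin}_{\mathcal{A}}(-q^{-1})$ (also attributed to Orlik--Solomon) together with Zaslavsky's theorem explains the coincidence. Your proposal is exactly this route spelled out in detail---derive $\mathrm{Poin}_{\mathcal{A}}(q)=(-q)^{d}\chi_{\mathcal{A}}(-q^{-1})$ from the Orlik--Solomon algebra and the sign pattern of $\mu$ on geometric lattices, then specialize and compare with Zaslavsky---so your approach is precisely the one the paper indicates, and you have correctly flagged the dimension-parity sign in the bounded-region identity.
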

Compare the theorem above with the following seminal result proved by Zaslavsky \cite{Zaslavsky}.
\begin{theorem}\label{thm: Zaslav}
Consider a hyperplane arrangement $\mathcal{A}$ in $\mathbb{R}^n$.  Then we have that $r(\mathcal{A})=(-1)^n\chi_{\mathcal{A}}(-1)$ and $b(\mathcal{A})=(-1)^n\chi_{\mathcal{A}}(1)$.
\end{theorem}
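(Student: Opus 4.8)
The plan is to prove both identities by induction on the number of hyperplanes, using deletion-restriction. Fix $H\in\mathcal{A}$ and form the deletion $\mathcal{A}'=\mathcal{A}\setminus\{H\}$, an arrangement in the same $\mathbb{R}^n$, and the restriction $\mathcal{A}''=\mathcal{A}^H$, the arrangement in $H\cong\mathbb{R}^{n-1}$ whose hyperplanes are the distinct traces $H\cap H'$ for $H'\in\mathcal{A}'$. The algebraic input is the deletion-restriction recurrence for the characteristic polynomial,
\begin{align*}
\chi_{\mathcal{A}}(q)=\chi_{\mathcal{A}'}(q)-\chi_{\mathcal{A}''}(q),
\end{align*}
a standard consequence of M\"obius-function theory on the intersection poset (see \cite{Orlik-Terao,Stanley-notes}), obtained by splitting the defining sum for $\chi_{\mathcal{A}}$ according to whether a flat is contained in $H$ and matching the two pieces against $L_{\mathcal{A}'}$ and $L_{\mathcal{A}''}$.

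For the region count I would first record the geometric recurrence $r(\mathcal{A})=r(\mathcal{A}')+r(\mathcal{A}'')$: reinserting $H$ into $\mathcal{A}'$ leaves untouched every region it misses and splits in two exactly those regions whose interior it meets, and the number of the latter is precisely the number of regions $r(\mathcal{A}'')$ into which the traces of the other hyperplanes divide $H$. Since $\mathcal{A}''$ lives in dimension $n-1$, the inductive hypotheses read $r(\mathcal{A}')=(-1)^n\chi_{\mathcal{A}'}(-1)$ and $r(\mathcal{A}'')=(-1)^{n-1}\chi_{\mathcal{A}''}(-1)$, so evaluating the $\chi$-recurrence at $q=-1$ gives
\begin{align*}
(-1)^n\chi_{\mathcal{A}}(-1)=(-1)^n\chi_{\mathcal{A}'}(-1)-(-1)^n\chi_{\mathcal{A}''}(-1)=r(\mathcal{A}')+r(\mathcal{A}'')=r(\mathcal{A}),
\end{align*}
the sign of the restriction term flipping because its ambient dimension drops by one. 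With the empty arrangement ($\chi_{\mathcal{A}}(q)=q^n$, one region) as base case this settles the first identity, and notably it requires no hypothesis on $\mathcal{A}$.

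The bounded case is where I expect the real difficulty, and I would reduce it to the analogous recurrence
\begin{align*}
b(\mathcal{A})=b(\mathcal{A}')+b(\mathcal{A}''),
\end{align*}
valid when $H$ is not a coloop, i.e. when $\mathrm{rank}(\mathcal{A}')=\mathrm{rank}(\mathcal{A})$. Establishing this recurrence is subtler than in the region case, because relatively bounded regions of $\mathcal{A}$ do not correspond naively to those of the two smaller arrangements: reinserting $H$ can both split a bounded region of $\mathcal{A}'$ into two bounded regions and create a new bounded region by slicing an unbounded one, and the bookkeeping must use non-coloopness to rule out the pathological convex-strip configurations in which a bounded slice bounds no region on either side. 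Granting the recurrence, the induction mirrors the region computation: I would evaluate the $\chi$-recurrence at $q=1$, choosing a non-coloop $H$ so that both $\mathcal{A}'$ (rank unchanged) and $\mathcal{A}''$ (which one checks is automatically essential in $H$ once $\mathcal{A}$ is essential) remain essential and the inductive hypotheses apply with the correct signs $(-1)^n$ and $(-1)^{n-1}$.

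The main obstacle, and the place where essentiality must be tracked carefully, is the terminal case in which every hyperplane of $\mathcal{A}$ is a coloop. Then $\mathcal{A}$ consists of hyperplanes with linearly independent normals meeting in a common point, so it is central, has no relatively bounded region, and has $\chi_{\mathcal{A}}(q)=(q-1)^n$; here $b(\mathcal{A})=0=(-1)^n\chi_{\mathcal{A}}(1)$ directly. This also explains why the identity $b(\mathcal{A})=(-1)^n\chi_{\mathcal{A}}(1)$ as stated presupposes that $\mathcal{A}$ is essential in $\mathbb{R}^n$: for a non-essential arrangement the correct normalization is $(-1)^{\mathrm{rank}(\mathcal{A})}$, which differs in sign from $(-1)^n$ precisely when $n-\mathrm{rank}(\mathcal{A})$ is odd and $\chi_{\mathcal{A}}(1)\neq0$. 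I would therefore state and carry the essentiality hypothesis through the induction---harmless for the application to $\hyplin_{n-1}$, which is essential inside $V_{n-1}$---and the crux of the whole argument is verifying the bounded recurrence and reconciling these sign conventions at each inductive step.
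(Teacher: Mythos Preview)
The paper does not prove this theorem at all: it is quoted as a classical result of Zaslavsky with a citation to \cite{Zaslavsky}, and no argument is supplied. So there is no proof in the paper to compare your proposal against.

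Your deletion--restriction outline is the standard route to Zaslavsky's theorem, and the region-count half is complete as you have written it. For the bounded-region half, however, you explicitly ``grant'' the recurrence $b(\mathcal{A})=b(\mathcal{A}')+b(\mathcal{A}'')$ for non-coloop $H$ rather than prove it; since that recurrence is the entire substance of the bounded case, what you have is a correct plan rather than a proof. The geometric argument you gesture at (tracking which unbounded regions of $\mathcal{A}'$ acquire a bounded piece when $H$ is reinserted, and matching the bookkeeping against $b(\mathcal{A}'')$) is exactly where the work lies, and it does require the non-coloop hypothesis in the way you indicate.

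Your observation about essentiality is well taken: as literally stated with $b(\mathcal{A})$ denoting the number of relatively bounded regions, the identity $b(\mathcal{A})=(-1)^n\chi_{\mathcal{A}}(1)$ fails for non-essential arrangements (two parallel lines in $\mathbb{R}^2$ already give a sign discrepancy), and one should replace $n$ by $\mathrm{rank}(\mathcal{A})$. The paper sidesteps this by restricting, a few lines after the statement, to essential arrangements in $V_{n-1}$, so the issue does not arise in the applications.
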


The similarity between the two theorems above was explained by Orlik-Solomon \cite{Orlik-Solomon-Inventiones} who proved that
\begin{align*}
\chi_{\mathcal{A}}(q)=q^d\mathrm{Poin}_{\mathcal{A}}(-q^{-1}),
\end{align*}
where $d=\dim (V)$.

We now turn our focus to special classes of hyperplane arrangements that are our primary object of study. Given a positive integer $n$, let $V_{n-1}$ be the subspace in $\mathbb{R}^n$ comprising vectors $(x_1,\ldots,x_n)$ such that $\sum_{i=1}^nx_i=0$. The hyperplane arrangements we consider in this paper will all lie in $V_{n-1}$ and, in fact, will be special instances of what are called \emph{truncated affine arrangements} in \cite{Postnikov-Stanley}.
This given, since the hyperplane arrangements that we will consider are \emph{essential}, we will not be using the term relatively bounded for our results; instead we will just use the term bounded (see Page 3 in \cite{Stanley-notes}).

Fix nonegative integers $a$ and $b$  such that $a+b\geq 2$. Consider the \emph{truncated affine arrangement} $\mathcal{A}_{n-1}^{a,b}$ given by
\begin{align*}
x_i-x_j=-a+1,-a+2,\ldots, b-1, \hspace{5mm} 1\leq i< j\leq n.
\end{align*}
We will denote the characteristic polynomial of $\mathcal{A}_{n-1}^{a,b}$ by $\chi_{n-1}^{a,b}(q)$.

Instances of  truncated affine arrangements have been a focus of intense study in recent years, and have resulted in a lot of interesting combinatorics. We describe these arrangements next.
Fix a positive integer $a\geq 1$. The arrangement $\mathcal{A}^{a,a}_{n-1}$  will be called the \emph{extended Catalan arrangement}, and will be denoted by $\hypcat_{n-1,a}$. Note that the case $a=1$ corresponds to the braid arrangement, while the case $a=2$ corresponds to the classical Catalan arrangement. The arrangement $\mathcal{A}^{a,a+1}_{n-1}$ will be referred to as the \emph{extended Shi arrangement}, and will be denoted by $\hypshi_{n-1,a}$. The case $a=1$ corresponds the extremely well-studied Shi arrangement \cite{Athanasiadis-Linusson, Headley, Postnikov-Stanley,Shi-Lecture notes, Shi-JLMS,Stanley-Pnas}.
Finally the arrangement $\mathcal{A}^{a-1,a+1}_{n-1}$ will be referred to as the \emph{extended Linial arrangement}, and will be denoted by $\hyplin_{n-1,a}$. The case $a=1$ corresponds to the classical Linial arrangement mentioned in the introduction of this article. From now onwards $\hyplin_{n-1}:=\hyplin_{n-1,1}$. Finally, before proceeding to our next section, we would like to mention that various aspects of the truncated affine arrangements have been studied in great depth in \cite{Athanasiadis-survey, Athanasiadis-JACo,Athanasiadis-EuJC,Postnikov-Stanley} and the reader is referred to them for further beautiful combinatorics surrounding them.

\iffalse
A question posed in \cite{Athanasiadis-Advances} is the following: Give a combinatorial interpretation for the number of bounded regions in the Linial arrangement.
In what follows we will give multiple interpretations to the number of bounded regions of the Linial arrangement.
But, unfortunately, there will be no explicit bijection.

From Zaslavsky's formula applied to the characteristic polynomial of the Linial arrangement, we have the following.
\fi

\section{Rook placements}\label{sec:rook placements}
A \emph{board} $B$ is a finite subset of $\mathbb{P}\times \mathbb{P}$ (thought of in Cartesian coordinates) where $\mathbb{P}$ denotes the set of positive integers. A \emph{nonattacking rook placement} on $B$ is one where there is at most one rook in any row or column. A nonattacking rook placement is called \emph{maximal} if there is exactly one rook in every row, and at most one in any column. We will label the rows and columns of a board with positive integers. By row $i$, we mean the $i$-th row from the bottom, while by column $j$ we mean the $j$-th column from the left. For the purposes of this article, we will primarily be interested in \emph{skew Ferrers boards}. These are boards corresponding to Young diagrams of shape $\lambda/\mu$ for partitions $\lambda$ and $\mu$ satisfying $\mu\subseteq \lambda$. 

%A \emph{file placement} of rooks on $B$ is a rook placement such that there is at most one rook in every row. A \emph{maximal file placement} is a rook such that there is exactly one rook in every row.

Given a board $B$ and a nonnegative integer $k$, let $r_k(B)$ denote the number of nonattacking rook placements of $k$ rooks on $B$. Clearly if $k$ exceeds the number of rows in $B$ then $r_{k}(B)=0$. Also, we have that $r_0(B)=1$. Finally, if $m\geq $ number of rows in $B$, we denote by $R_m(x,B)$ the \emph{$m$-factorial polynomial} of $B$ \cite{GJW-iii} defined below.
\begin{align*}
R_m(x,B)=\sum_{k\geq 0}r_k(B)(x)_{m-k}
\end{align*}
Here $(x)_{j}=x(x-1)\cdots (x-j+1)$ is the usual \emph{falling factorial}. We will normally set $m$ equal to the number of rows in the board $B$. In this case we will refer to the $m$-factorial polynomial of $B$ as the \emph{factorial polynomial} of $B$, and denote it by $R(x,B)$.

\begin{example}\label{example: board b}
\em{Consider the board $B$ below which has $3$ rows.
\begin{align*}
\ytableausetup{mathmode, boxsize=0.8em}
\begin{ytableau}
\none & \none & \none & *(blue!50) & *(blue!50)& *(blue!50)\\
\none & \none & *(blue!50) & *(blue!50)& *(blue!50)\\
\none & *(blue!50) & *(blue!50)& *(blue!50)\\
\end{ytableau}
\end{align*}
 Note that from its definition it follows that $R(x,B)=(x)_3+9(x)_2+22(x)_1+14$. It is worth mentioning that the coefficients $1,9,22,$ and $14$ appear in the analysis of the `three-ply' staircase in the work of Riordan \cite{Riordan}. Of course, the above board is a special instance of the boards considered by Riordan.}
\end{example}
The theorem below, which is present in \cite{GJW-v}, will be useful for us. Let $\Pi_m$ denote the lattice of partitions of $[m]$, and let $\hat{0}$ denote the unique minimum in $\Pi_m$. Consider a board $B$ with $m$ rows. Given a subset $S \subseteq [m]$, we define a statistic $v_{B}(S)$ as follows.
\begin{align*}
v_B(S)=|\{j\suchthat (i,j)\in B \hspace{2mm}\forall i\in S\}|
\end{align*}
\begin{theorem}[Goldman-Joichi-White]\label{thm: GJW-v}
Let $B$ be a board with $m$ rows. Then
\begin{align*}
R(x,B)=\sum_{\sigma\in \Pi_m}\mu(\hat{0},\sigma)\prod_{A\in \sigma}(x+v_{B}(A))
\end{align*}
where $\mu$ denotes the M\"obius function of $\Pi_{m}$. Also, by $A\in\sigma$ we mean that $A$ is a block in $\sigma$.
\end{theorem}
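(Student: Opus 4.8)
The plan is to prove the stated identity of polynomials in $x$ by checking it at every positive integer $x=N$; since both sides have degree $m$ in $x$, agreement at infinitely many values forces equality. The heart of the argument is to exhibit one set of combinatorial objects counted simultaneously by both sides of the specialized identity. Fix $N\geq 1$ and let $C$ be a set of $N$ auxiliary colors disjoint from $\mathbb{P}$. I would call a function $\phi\colon[m]\to C\sqcup\mathbb{P}$ \emph{$B$-admissible} if $\phi(i)\in\mathbb{P}$ implies $(i,\phi(i))\in B$ for every $i$, and say $\phi$ is \emph{$\sigma$-constant}, for $\sigma\in\Pi_m$, if it is constant on each block of $\sigma$. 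The first step is the observation that $\prod_{A\in\sigma}(N+v_B(A))$ equals the number of $B$-admissible $\sigma$-constant functions: on a block $A$ one may take as the common value any of the $N$ colors or any column $j$ with $(i,j)\in B$ for all $i\in A$, and there are $v_B(A)$ of the latter by definition.

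The second step is M\"obius inversion on $\Pi_m$. Every $B$-admissible $\sigma$-constant $\phi$ has a level-set partition $\tau(\phi)$, which is coarser than $\sigma$; conversely, letting $g(\tau)$ be the number of $B$-admissible functions whose level-set partition is exactly $\tau$, one checks that $g(\tau)$ depends on $\tau$ alone, since it counts the ways to assign to each block $T$ of $\tau$ a distinct value that is either a color or a column $j$ with $(i,j)\in B$ for all $i\in T$. Sorting $\sigma$-constant $B$-admissible functions by their level-set partition gives $\prod_{A\in\sigma}(N+v_B(A))=\sum_{\tau\succeq\sigma}g(\tau)$, and M\"obius inversion in $\Pi_m$ (with $\hat{0}$ the all-singletons partition at the bottom) then yields $\sum_{\sigma\in\Pi_m}\mu(\hat{0},\sigma)\prod_{A\in\sigma}(N+v_B(A))=g(\hat{0})$.

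The final step is to recognize $g(\hat{0})$ as $R(N,B)$. Since $\hat{0}$ is the all-singletons partition, $g(\hat{0})$ counts injective $B$-admissible functions $\phi\colon[m]\to C\sqcup\mathbb{P}$; grouping these by the number $k$ of rows $i$ with $\phi(i)\in\mathbb{P}$, such a $\phi$ amounts to a nonattacking placement of $k$ rooks on $B$ together with an injective assignment of colors to the remaining $m-k$ rows, so their number is $r_k(B)(N)_{m-k}$. Hence $g(\hat{0})=\sum_{k\geq 0}r_k(B)(N)_{m-k}=R(N,B)$, finishing the verification. I expect the only subtle points to be getting the direction of M\"obius inversion on $\Pi_m$ right and confirming that the interpolating function $g$ really is independent of $\sigma$; the remainder is bookkeeping. (Since this is precisely the result established in \cite{GJW-v}, one could also simply cite that reference.)
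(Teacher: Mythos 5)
Your proof is correct. Note that the paper offers no argument for this statement at all---it is quoted as a theorem of Goldman--Joichi--White with a pointer to \cite{GJW-v}---so there is no internal proof to compare with; your argument is essentially the classical one, and it is in the same spirit as the cited source, whose proof is likewise a M\"obius inversion over $\Pi_m$. All three steps check out: the number of $B$-admissible $\sigma$-constant maps is $\prod_{A\in\sigma}(N+v_B(A))$ because the choices on distinct blocks are independent and on a block $A$ the admissible common values are the $N$ colors together with the $v_B(A)$ columns covering all of $A$; a map is $\sigma$-constant exactly when its level-set partition coarsens $\sigma$, so $\prod_{A\in\sigma}(N+v_B(A))=\sum_{\tau\succeq\sigma}g(\tau)$, and the dual form of M\"obius inversion applied at $\hat{0}$ gives $g(\hat{0})=\sum_{\sigma\in\Pi_m}\mu(\hat{0},\sigma)\prod_{A\in\sigma}(N+v_B(A))$; finally, injective admissible maps with exactly $k$ values in $\mathbb{P}$ correspond bijectively to a nonattacking placement of $k$ rooks on $B$ together with an injection of the remaining $m-k$ rows into the $N$ colors, so $g(\hat{0})=\sum_{k\geq 0}r_k(B)(N)_{m-k}=R(N,B)$. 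Since both sides are polynomials in $x$ of degree at most $m$ and agree at every $N\geq 1$, the identity follows. One wording quibble: the remark that ``$g(\tau)$ depends on $\tau$ alone'' is vacuous, as $g$ is defined purely in terms of $\tau$; the only thing genuinely needed there is the displayed sum relation, which you have.
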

\begin{example}\em{
Given below is the Hasse diagram of the partition lattice $\Pi_3$.
\begin{align*}
\begin{tikzpicture}
        \tikzstyle{every node} = [rectangle]
        \node (s) at (0,-3) {$1/2/3$};
        \node (s1) at (-2,-2) {$12/3$};
        \node (s3) at (0,-2) {$13/2$};
        \node (s2) at (2,-2) {$1/23$};
        \node (s12) at (0,-1) {$123$};
        \foreach \from/\to in {s/s1, s/s2, s1/s12, s2/s12,s/s3,s3/s12}
            \draw[-] (\from) -- (\to);
\end{tikzpicture}
\end{align*}
In writing the blocks of a set partition above, we have omitted commas and braces.

Then, by Theorem \ref{thm: GJW-v}, for the same board $B$ as in Example \ref{example: board b}, we obtain that $R(x,B)=(x+3)^3-(x+2)(x+3)-(x+1)(x+3)-(x+2)(x+3)+2(x+1)=x^3 + 6x^2 + 15x + 14$. Note that $R(1,B)=36$ and $R(-1,B)=4$. To hint at where we are going with this example, note further that 36 and 4 are the number of regions and the number of bounded regions in the Linial arrangement $\hyplin_3$ respectively. }
\end{example}
We will also be considering rook placements on rectangular boards in terms of functions as described next.
Given positive integers $n,k$, let $\Func _{n,k}$ denote the following set.
\begin{align*}
\Func_{n,k}=\{f:[n-1] \hookrightarrow [kn] \}
\end{align*}
That is, the elements of $\Func_{n,k}$ are injective functions from the first $n-1$ positive integers to the first $kn$ positive integers. 
We will identify an element $f\in\Func_{n,k}$ with the rook placement on the rectangular board with $n-1$ rows and $kn$ columns obtained by placing a rook in the position $(i,f(i))$ for $1\leq i\leq n-1$. 
Note that this is a maximal nonattacking rook placement. 
Furthermore, statements involving rook placements on subboards of the $(n-1)\times kn$ board can be phrased in terms of elements of $\Func_{n,k}$ satisfying certain constraints. Finally, we note here that there is a unique injective function from the empty set $\emptyset$ (corresponding to the case $n=1$)  into $[k]$ for $k$ a positive integer. This corresponds the unique rook placement of $0$ rooks on the empty board. To avoid unnecessary complications with the case $n=1$, we will assume throughout that $n\geq 2$.

Closely associated to the set $\Func_{n,k}$ is the set $\wtFunc_{n,k}$ defined below.
\begin{align*}
\wtFunc_{n,k}=\{f: [n-1] \hookrightarrow [n] \times [k]\}
\end{align*}
The sets $\wtFunc_{n,k}$ and $\Func_{n,k}$ are clearly equinumerous, and we will use the bijection $\Phi$ described next to associate an element of $\wtFunc_{n,k}$ with $\Func_{n,k}$. The image of $f\in \Func_{n,k}$ under $\Phi$ is the element $\Phi(f)\in \wtFunc_{n,k}$ defined as follows.
\begin{align*}
\Phi(f) (i) = (f(i)-n\left\lfloor \frac{f(i)-1}{n} \right\rfloor, \left\lceil \frac{f(i)}{n} \right\rceil) 
\end{align*}
It is easy to see that $\Phi$ establishes a bijection between $\Func_{n,k}$ and $\wtFunc_{n,k}$.

For any  element $g\in \wtFunc_{n,k}$ we define $1\leq i\leq n-1$ to be a \emph{$j$-excedance} (respectively \emph{$j$-subcedance}) if $g(i)$ equals $(a,j)$ for some positive integer $a$ satisfying $a>i$ (respectively $a\leq i$). This given we associate two weak compositions with $g\in \wtFunc_{n,k}$ as follows
\begin{align*}
\exc(g)&=(\alpha_1,\ldots,\alpha_{k})\\
\sub(g)&=(\beta_1,\ldots, \beta_{k}),
\end{align*}
where $\alpha_j$ (respectively $\beta_j$) counts the number of $j$-excedances (respectively $j$-subcedances) in $f$. It is easily seen that $|\exc(g)|+|\sub(g)|=n-1$.

Let $\alpu = \{u_1,u_2,\ldots\}$ and $\alpv = \{v_1,v_2,\ldots\}$ be two countably infinite sets of commuting indeterminates, which further commute with each other. This given, we associate a polynomial $G^{(1)}_{n,k}$ in the variables $\alpu$ and $\alpv$ with $\wtFunc_{n,k}$ as shown below.
\begin{align*}
G^{(1)}_{n,k}=\sum_{g\in\wtFunc_{n,k}} \alpu^{\exc(g)}\alpv^{\sub(g)}
\end{align*}
Our next two examples illustrate the various notions introduced above. The next example also serves to illustrate how we will think of elements of $\wtFunc_{n,k}$ as certain rook placements of `colored' rooks on an $(n-1)\times n$ rectangular board.
\begin{example}
\em{Let $n=6$ and $k=2$. The  maximal nonattacking rook placement below on the left corresponds to a unique element $f \in\Func_{6,2}$. The rook placement on the right corresponds to element $g:=\Phi(f)\in\wtFunc_{6,2}$.
\begin{align*}
\ytableausetup{boxsize=1em}
\begin{ytableau}
*(red!75) & *(red!75) &*(red!75) &*(red!75) &*(red!75)\rook &*(red!40)  &*(blue!65) &*(blue!65) &*(blue!65) &*(blue!65) &*(blue!65) &*(blue!40) \\
*(red!75) & *(red!75) &*(red!75) &*(red!75) &*(red!40) &*(red!40)  &*(blue!65) &*(blue!65)\rook &*(blue!65) &*(blue!65)&*(blue!40) &*(blue!40) \\
*(red!75) & *(red!75) &*(red!75) &*(red!40) &*(red!40) &*(red!40)  &*(blue!65)\rook &*(blue!65) &*(blue!65) &*(blue!40) &*(blue!40) &*(blue!40)\\
*(red!75) & *(red!75) &*(red!40) &*(red!40) &*(red!40) &*(red!40) &*(blue!65) &*(blue!65) &*(blue!40)\rook &*(blue!40) &*(blue!40) &*(blue!40)  \\
*(red!75) & *(red!40) &*(red!40) &*(red!40) &*(red!40)\rook &*(red!40) &*(blue!65) &*(blue!40) &*(blue!40) &*(blue!40) &*(blue!40) &*(blue!40)  \\
\end{ytableau}
\hspace{10mm}
\begin{ytableau}
*(white) & *(white) &*(white) &*(white) &*(white)\textcolor{red}\rook &*(white) \\
*(white) &*(white)\textcolor{blue}{\rook} &*(white) &*(white) &*(white) &*(white) \\
*(white)\textcolor{blue}{\rook} & *(white) &*(white) &*(white) &*(white) &*(white)\\
*(white) &*(white) &*(white)\textcolor{blue!40}{\rook} &*(white) &*(white) &*(white) \\
*(white) &*(white) &*(white) &*(white) & *(white)\textcolor{red!40}{\rook} &*(white) \\
\end{ytableau}
\end{align*}
Note that the rook placement on the right has `colored' rooks, and is nonattacking in the sense that if two rooks occupy the same columns, then they must have distinct colors.
If we look in the $i$-th row for $1\leq i\leq 5$, then the color of the rook implicitly tells us about the second coordinate of $g(i)$ and the column in which it belongs gives us the first coordinate of $g(i)$. 

Note that $\exc(g)=(1,1)$ and $\sub(g)=(1,2)$, and therefore, the associated monomial with $g$ is $u_1u_2v_1v_2^2$.}
\end{example}

\begin{example}
\em{
We will quickly compute $G_{2,2}^{(1)}$ next. The four maximal nonattacking rook placements corresponding to $\Func_{2,2}$ are shown below.
\begin{align*}
\ytableausetup{boxsize=1em}
\begin{ytableau}
*(red!75)\rook & *(red!40) & *(blue!65) & *(blue!40)
\end{ytableau}\hspace{3mm}
\begin{ytableau}
*(red!75) & *(red!40)\rook & *(blue!65) & *(blue!40)
\end{ytableau}\hspace{3mm}
\begin{ytableau}
*(red!75) & *(red!40) & *(blue!65)\rook & *(blue!40)
\end{ytableau}\hspace{3mm}
\begin{ytableau}
*(red!75) & *(red!40) & *(blue!65)  & *(blue!40)\rook
\end{ytableau}\hspace{3mm}
\end{align*}
From these we obtain the four elements of $\wtFunc_{2,2}$ shown below.
\begin{align*}
\begin{ytableau}
*(white)\textcolor{red}{\rook} & *(white)
\end{ytableau}\hspace{10mm}
\begin{ytableau}
*(white) & *(white)\textcolor{red!40}{\rook}
\end{ytableau}\hspace{10mm}
\begin{ytableau}
*(white)\textcolor{blue}{\rook} & *(white)
\end{ytableau}\hspace{10mm}
\begin{ytableau}
*(white) & *(white)\textcolor{blue!40}{\rook}
\end{ytableau}
\end{align*}
Thus, we obtain that
\begin{align*}
G_{2,2}^{(1)}=v_1+u_1+v_2+u_2.
\end{align*}}
\end{example}

\subsection{Catalan, Shi, and Linial boards}
In this subsection, we consider certain special boards that are important for us. 
Let $t$ be a  nonnegative integer. We define the \emph{$t$-Catalan board}, denoted by $C_{t,n}$ to be the $(n-1)\times (n-2+t)$ board. We define the \emph{$t$-Shi board}, denoted by $S_{t,n}$ to be the Ferrers board with shape $(2n-3+t,\ldots,n-1+t)$. Finally, we define the \emph{$t$-Linial board}, denoted by $L_{t,n}$, to be the skew Ferrers board with shape $(2n-3+t,\ldots,n-1+t)/(n-1,\ldots,1)$. An easy computation establishes the following.
\begin{align*}
r_{n-1}(C_{t,n})&=\prod_{j=1}^{n-1}(t+n-1-j)=\sum_{j=0}^{n-1}r_{j}(C_{0,n})(t)_{n-1-j}=R(t,C_{0,n})\\
r_{n-1}(S_{t,n})&=(t+n-1)^{n-1}=\sum_{j=0}^{n-1}r_{j}(S_{0,n})(t)_{n-1-j}=R(t,S_{0,n}).
\end{align*}
From \cite[Theorem 2.4]{Headley} (also from \cite[Corollary 9.9.1]{Postnikov-Stanley} and \cite[Theorem 3.3]{Athanasiadis-Advances}), it follows that $\chi_{n-1}^{1,2}(q)=(q-n)^{n-1}$. Similarly, from \cite[Theorem 9.8]{Postnikov-Stanley}, it follows that $\chi_{n-1}^{1,1}=(q+1-n)(q+2-n)\cdots (q+n-1-n)$. We therefore infer the following equalities.
\begin{align}\label{eqn: catshi}
R(t,C_{0,n})&=(-1)^{n-1}\chi_{n-1}^{1,1}(1-t)\nonumber\\
R(t,S_{0,n})&=(-1)^{n-1}\chi_{n-1}^{1,2}(1-t) 
\end{align} 
At this point, the reader is invited to compare the equalities above with the statement of Theorem \ref{thm: intro 2}.
Furthermore, given Equation \eqref{eqn: catshi}, note that Theorem \ref{thm: GJW-v} allows us to write the characteristic polynomials of the extended Catalan and extended Shi arrangements as a sum that runs over the elements of partition lattice $\Pi_{n-1}$ using the identities below.
\begin{align}\label{eqn: catshi2}
R(t,C_{0,n})&=\sum_{\sigma\in \Pi_{n-1}}\mu(\hat{0},\sigma)\prod_{A\in \sigma}(t+n-2)\nonumber\\
R(t,S_{0,n})&=\sum_{\sigma\in \Pi_{n-1}}\mu(\hat{0},\sigma)\prod_{A\in \sigma}(t+n-2+\min(A))
\end{align}

The preceding discussion naturally raises the question of finding a similar interpretation for characteristic polynomials of extended Linial arrangements. As we shall soon see, the Linial boards will be crucial to this end. The route we follow is described next: Express $r_{n-1}(L_{t,n})$ as the factorial polynomial of $L_{0,n}$ and observe that this polynomial is of degree $n-1$ in $t$. Using the bijection we describe in the next section, we compute $r_{n-1}(L_{t,n})$ for infinitely many integer specializations of $t$ and relate it to the  the characteristic polynomial of extended Linial arrangement.

%\textcolor{red}{Insert example involving Linial board here or mention example earlier.}
\begin{example}
\em{
For $n=3$ and $t=2$, given below are the boards $C_{t,n}$, $S_{t,n}$ and $L_{t,n}$.
\begin{align*}
\ytableausetup{mathmode,boxsize=0.8em}
\begin{ytableau}
  *(blue!50) & *(blue!50) & *(blue!50)\\
 *(blue!50)& *(blue!50) & *(blue!50)\\
\end{ytableau}
\hspace{5mm}
\ytableausetup{mathmode,boxsize=0.8em}
\begin{ytableau}
 *(blue!50) & *(blue!50) & *(blue!50) & *(blue!50) & *(blue!50)\\
 *(blue!50)& *(blue!50) & *(blue!50) & *(blue!50) \\
\end{ytableau}
\hspace{5mm}
\ytableausetup{mathmode,boxsize=0.8em}
\begin{ytableau}
 \none & *(blue!50) & *(blue!50) & *(blue!50)\\
 *(blue!50)& *(blue!50) & *(blue!50)\\
\end{ytableau}
\end{align*}

}
\end{example}
\section{Trees}\label{sec:trees}
Given positive integers $n$ and $k$, we will denote the set of \emph{labeled rooted trees} on $n$ vertices such that any vertex has at most $k$ children by $\tree_{n,k}$. Furthermore, we will denote the set of \emph{labeled rooted plane $k$-ary trees} on $n$ vertices by $\ptree_{n,k}$. Given any vertex $v$ of a tree $T\in \ptree_{n,k}$, we will consider its children ordered from left to right, and by  the $i$-th child of $v$ we mean the $i$-th child from the left where $1\leq i\leq k$. We remark here that the trees in $\ptree_{n,k}$ are \emph{incomplete}, in that every nonleaf node need not $k$ children. Additionally, we will refer to the first child as the \emph{left child} and the $k$-th child as the \emph{right child}. We will want to distinguish between left and right children, and therefore, from this point onwards, we will work under the assumption that $k\geq 2$. For all our trees, we will implicitly assume that the edges of the tree are directed towards the root. 

A tree $T\in \tree_{n,k}$ can be endowed with some additional data so as to obtain a \emph{decorated tree}. Essentially, we assign to each edge connecting any parent node to its children in $T$ a distinct label from the set $[k]$. We denote the set of decorated trees on $n$ vertices such that every node has at most $k$ children by $\dtree_{n,k}$. Note that there is an obvious natural bijection between $\dtree_{n,k}$ and $\ptree_{n,k}$. Given $T\in\dtree_{n,k}$, for an edge labeled $i$ going from $p$ to $q$, we retain the edge but designate $q$ to be the $i$-th child of $p$ (and get rid of the labels of the edge). It is easy to see that in so doing, we obtain an element of $\ptree_{n,k}$, and that the aforementioned map is indeed a bijection.

Given $T\in \ptree_{n,k}$, define the \emph{spine} of $T$, denoted by $\spine(T)$ to be the sequence of numbers encountered along the unique path from the root to the vertex labeled $n$.  Suppose $\spine(T)=a_1\cdots a_m$ where $a_1$ is the label of the root and $a_m=n$. We will call an integer $1\leq i\leq m-1$ a \emph{record} of $T$ if $a_i>a_j$ for all $1\leq j< i$. Furthermore, we will denote the number of records in $T$ by $\rec(T)$.

\begin{remark}
Note that records of a labeled tree are a straightforward generalization of records or left-to-right maxima of a permutation.
\end{remark}

 Now given a tree $T\in \ptree_{n,k}$ and $1\leq i \leq k$, call an edge from $q$ to $p$ an \emph{$i$-descent} if the following two conditions are met.
\begin{enumerate}
\item $p>q$.
\item $q$ is the $i$-th child of $p$.
\end{enumerate}
We define an \emph{$i$-ascent} similarly by reversing the inequality in the first condition above. Using these notions we define certain special subsets of $\ptree_{n,k}$.
\begin{align*}
\iptree_{n,k}&=\{T\in \ptree_{n,k}\suchthat \text{$T$ has no $i$-descents for $1\leq i\leq k$}\}\\
\riptree_{n,k}&=\{T\in \ptree_{n,k}\suchthat \text{$T$ has no $k$-descents}\}\\
\ltree_{n,k}&=\{T\in \ptree_{n,k}\suchthat \text{$T$ has no $1$-ascents or $k$-descents}\}\\
\ltree_{n,k}^{b}&=\left\{T\in \ltree_{n,k}\suchthat \begin{array}{c}\text{$1$ has no right child and}\\\text{$n$ has no left child}\end{array}\right\}
\end{align*}
Clearly, elements of $\iptree_{n,k}$ are labeled rooted \emph{increasing} plane $k$-ary trees while elements of $\riptree_{n,k}$ are labeled rooted plane $k$-ary trees that always increase to the right. Both of these classes are extremely well studied classes, and it is the other two classes defined above that are important for our purposes. 

We will be interested in enumerating trees according to various constraints based on ascents and descents. To this end, it helps to introduce a multivariate generating function keeping track of these statistics. Firstly, we associate two weak compositions $\asc(T)=(\alpha_1,\ldots, \alpha_k)$ and $\dsc(T)=(\beta_1,\ldots, \beta_k)$ with $T\in \ptree_{n,k}$ as shown below.
\begin{align*}
\alpha_i &= |\{\text{$i$-descents in } T\}|\\
\beta_i &=|\{\text{$i$-ascents in } T\}|
\end{align*}
Clearly, we have that $|\alpha|+|\beta|=n-1$.
Next we associate a polynomial $G^{(2)}_{n,k}$ in the variables $\alpu$ and $\alpv$ with $\ptree_{n,k}$.
\begin{align*}
G^{(2)} _{n,k}=\sum_{T\in\ptree_{n,k}} \alpu^{\dsc(T)}\alpv^{\asc(T)}
\end{align*}

\begin{remark}
We could have easily associated analogues of the aforementioned statistics to decorated trees rather than labeled rooted plane $k$-ary trees.
\end{remark}

Our next goal is to establish that $G^{(1)}_{n,k}=G^{(2)}_{n,k}$. To this end, we will describe our all important bijection between $\wtFunc_{n,k}$ and $\ptree_{n,k}$. 
We would like to emphasize that our algorithm is a mild generalization of the bijection given in \cite{Remmel-Egecioglu}. The reader should read the construction described next in tandem with Example \ref{example: main bijection}.

Let $f\in \wtFunc_{n,k}$. We construct first the decorated digraph corresponding to $f$ as follows: For a positive integer $i$ satisfying $1\leq i\leq n-1$, let $f(i)=(a,b)$. Note that $1\leq a\leq n$ and $1\leq b\leq k$. Draw a directed edge from $i$ to $a$, and assign it the label $b$. Given the definition of $\wtFunc_{n,k}$, it is clear that there is no other edge directed towards $a$ whose label is $b$. Additionally, the digraph thus obtained possesses the following features.
\begin{itemize}
\item There is a unique connected component containing $n$, and it is a tree rooted at $n$.
\item The other connected components (if any) comprise directed cycles (with length $\geq 1$) of rooted trees. 
\end{itemize}
We will consider the connected components of the functional digraph thus obtained from left to right in increasing order of the largest entries in their directed cycles. Furthermore, the connected component that contains $n$ (which does not contain any directed cycles) will be the rightmost. Let these connected components be $T_1$ through $T_r$. For each $1\leq i\leq r-1$,  let $a_i$ denote the largest entry taking part in the directed cycle in $T_i$ and suppose further that the unique edge directed out of $a_i$ points towards $b_i$, and that the label of this edge is $c_i$. Furthermore, let $a_{r}=n$. Now for each $1\leq i\leq r-1$, convert each $T_i$ to a tree rooted at $a_i$  by removing the directed edge connecting $a_i$ to $b_i$ labeled $c_i$ (and remember this label). In this way, from the functional digraph, we obtain a sequence of rooted trees $T_1$ through $T_r$ with labeled edges. 

Next we will assemble this sequence of rooted trees into a single tree. For $1\leq i\leq r-1$, draw a directed edge from $a_{i+1}$ to $b_{i}$ labeled $c_i$. Note that this gives us a tree rooted at $a_1$. Furthermore, note that the procedure is reversible. That is, given a rooted tree with vertices labeled from $[n]$ and edges labeled from $[k]$, we can recover a unique functional digraph as follows: Consider the path from root to the vertex labeled $n$. Let this be comprising vertices labeled $j_1,\ldots,j_p$. Let $s$ and $t$ be the indices of two successive records in the sequence $j_1,\ldots, j_m$ with $s<t$. Remove the edge connecting $j_{t-1}$ to $j_t$. Suppose further that the label of this edge was $c$. Now insert an directed edge from $j_s$ to $j_{t-1}$ with label $c$. Doing this procedure for every pair of consecutive records gives us the corresponding functional digraph.

At this point, it is easy to obtain an element of $\ptree_{n,k}$. Indeed, if the edge from $i$ to $j$ has label $l$, then we set $i$ to be the $l$-th child of $j$. Doing this gives us a tree $T\in \ptree_{n,k}$.
Let us call the tree thus obtained $\Psi(f)$. It is easy to see given the preceding paragraph that $\Psi$ is a bijection between $\wtFunc_{n,k}$ and $\ptree_{n,k}$ (and therefore induces a bijection between $\Func_{n,k}$ and $\ptree_{n,k}$). 
\begin{example}\label{example: main bijection}
\em{
Let $n=21$ and $k=3$. Consider the following element of $\wtFunc_{21,3}$.

\begin{tabular}{ |p{0.5cm}|p{0.5cm}|p{0.5cm}|p{0.5cm}|p{0.5cm}|p{0.5cm}|p{0.5cm}|p{0.5cm}|p{0.5cm}|p{0.5cm}|p{0.5cm}|p{0.5cm}|p{0.5cm}|p{0.5cm}|p{0.5cm}|p{0.5cm}|p{0.5cm}|p{0.5cm}|p{0.5cm}|p{0.5cm}|}
 \hline
 \multicolumn{20}{|c|}{$f$} \\
 \hline
 1 & 2 & 3 & 4 & 5 & 6 & 7 & 8 & 9 & 10 & 11 & 12 & 13 & 14 & 15 & 16 & 17 & 18 & 19 & 20\\
 \hline
 3,1   & 5,2    & 4,3 &  5,3 & 3,3   & 21,2    & 7,1 &  12,2 & 1,3   & 4,2    & 4,1 &  20,2 & 19,3   & 19,1    & 6,2 &  1,2 & 16,1   & 6,3    & 7,2 & 12,3 \\
 \hline
\end{tabular}

Then we obtain the following decorated digraph corresponding to $f$.

\begin{tikzpicture}[every tree node/.style={fill=red!30,draw=red!60,circle,inner sep=0.5pt,minimum size=2pt},
   level distance=1.25cm,sibling distance=1cm,
   edge from parent path={(\tikzparentnode) -- (\tikzchildnode)}]
\Tree
[.\node(5){$5$};
    \edge[<-,thick,blue] node[auto=right] {$3$};
    [.\node(4){$4$};
       \edge[<-,thick,blue] node[auto=right] {$3$};
       [.\node(3){$3$};
       \edge[<-,thick,blue] node[auto=right] {$1$};
       	[.\node(1){$1$};
       	\edge[<-,thick,blue] node[auto=right] {$2$};
       	[.\node(16){$16$};
       	\edge[<-,thick,blue] node[auto=right] {$1$};
       		[.\node(17){$17$};]]
       	\edge[<-,thick,blue] node[auto=left] {$3$};
       	[.\node(9){$9$};]
       	]
       ]
       \edge[<-,thick,blue] node[auto=right] {$1$};
       [.\node(11){$11$};]
       \edge[<-,thick,blue] node[auto=left] {$2$};
       [.\node(10){$10$};]
        ]
    \edge[<-,thick,blue] node[auto=left] {$2$};
    [.\node(2){$2$}; 
    ]
]
\path (5) edge[->, in=90,out=150,very thick, dashed, blue,auto=right] node {$3$} (3);
\end{tikzpicture}
\begin{tikzpicture}
[every tree node/.style={fill=red!30,draw=red!60,circle, inner sep=0.5pt, minimum size=2pt},
   level distance=1.25cm,sibling distance=1cm,
   edge from parent path={(\tikzparentnode) -- (\tikzchildnode)}]

\Tree
[
.\node(7){$7$};
\edge[<-,thick,blue] node[auto=right] {$2$};
	[
	.\node(19){$19$};
	\edge[<-,thick,blue] node[auto=right] {$3$};
		[.\node(13){$13$};]
	\edge[<-,thick,blue] node[auto=left] {$1$};	
		[.\node(14){$14$};]
	]
]
\path (7) edge [loop above, very thick, dashed, blue,auto=above] node {$1$} (7);
\end{tikzpicture}\hspace{10mm}
\begin{tikzpicture}
[every tree node/.style={ fill=red!30,draw=red!60,circle, inner sep=0.5pt, minimum size=2pt},
   level distance=1.25cm,sibling distance=1cm,
   edge from parent path={(\tikzparentnode) -- (\tikzchildnode)}]

\Tree
[
.\node(20){$20$};
\edge[<-,thick,blue] node[auto=left] {$2$};
	[
	.\node(12){$12$};
	\edge[<-,thick,blue] node[auto=right] {$2$};
		[.\node(8){$8$};]
	]
]
\path (20) edge[->, in=120,out=210,very thick, dashed, blue,auto=right] node {$3$} (12);
\end{tikzpicture}
\hspace{10mm} 
\begin{tikzpicture}
[every tree node/.style={fill=red!30, draw=red!60,circle,inner sep=0.5pt, minimum size=2pt},
   level distance=1.25cm,sibling distance=1cm,
   edge from parent path={(\tikzparentnode) -- (\tikzchildnode)}]

\Tree
[
.\node(21){$21$};
\edge[<-,thick,blue] node[auto=right] {$2$};
	[
	.\node(6){$6$};
	\edge[<-,thick,blue] node[auto=right] {$2$};
		[.\node(15){$15$};]
	\edge[<-,thick,blue] node[auto=left] {$3$};
		[.\node(18){$18$};]	
	]
]
\end{tikzpicture}

Applying the procedure outlined earlier, we obtain the following element of $\dtree_{21,3}$.
$$
\begin{tikzpicture}[every tree node/.style={fill=red!30,draw=red!60,circle,inner sep=0.5pt,minimum size=4pt},
   level distance=1.2cm,sibling distance=0.4cm,
   edge from parent path={(\tikzparentnode) -- (\tikzchildnode)}]
\Tree
[.\node(5){$5$};
	\edge[<-,thick,blue] node[auto=right] {$2$};
	[.\node(2){$2$};
	]
	\edge[<-,thick,blue] node[auto=left] {$3$};
	[.\node(4){$4$};
		\edge[<-,thick,blue] node[auto=right]{$1$};
		[.\node(11){$11$};
		]	
		\edge[<-,thick,blue] node[auto=left]{$2$};
		[.\node(10){$10$};
		]	
		\edge[<-,thick,blue] node[auto=left]{$3$};
		[.\node(3){$3$};
			\edge[<-,thick,blue] node[auto=right]{$1$};
			[.\node(1){$1$};	
				\edge[<-,thick,blue] node[auto=right]{$2$};
				[.\node(16){$16$};
					\edge[<-,thick,blue] node[auto=right]{$1$};
					[.\node(17){$17$};]
				]	
				\edge[<-,thick,blue] node[auto=left]{$3$};
				[.\node(9){$9$};
			   ]				
			]		
			\edge[<-,thick,dashed,blue] node[auto=left]{$3$};
			[.\node(7){$7$};
				\edge[<-,thick,dashed,blue] node[auto=right]{$1$};
				[.\node(20){$20$};	
					\edge[<-,thick,blue] node[auto=right]{$2$};
					[.\node(12){$12$};	
						\edge[<-,thick,blue] node[auto=right]{$2$};
						[.\node(8){$8$};]					
					\edge[<-,thick,dashed,blue] node[auto=left]{$3$};
					[.\node(21){$21$};
						\edge[<-,thick,blue] node[auto=left]{$2$};
						[.\node(6){$6$};	
							\edge[<-,thick,blue] node[auto=right]{$2$};
							[.\node(15){$15$};
							]	
							\edge[<-,thick,blue] node[auto=left]{$3$};
							[.\node(18){$18$};
							]						
						]						
					]				
				]
				]	
				\edge[<-,thick,blue] node[auto=left]{$2$};
				[.\node(19){$19$};
					\edge[<-,thick,blue] node[auto=right]{$1$};
					[.\node(14){$14$};
					]		
					\edge[<-,thick,blue] node[auto=left]{$3$};
					[.\node(13){$13$};
					]				
				]			
			]		
		]	
	]
]
\end{tikzpicture}
$$
One can now easily obtain $\Psi(f)\in\ptree_{21,3}$ by orienting the edges according to the labels they carry.}
\end{example}
This brings us to our next theorem detailing the properties of $\Psi$.

\begin{theorem}
The map $\Psi$ is a bijection from $\wtFunc_{n,k}$ and $\ptree_{n,k}$. The number of connected components in the functional digraph corresponding to $f\in \wtFunc_{n,k}$ is $\rec(\Psi(f))$. Furthermore, we have that $\exc(f)=\dsc(\Psi(f))$ and $\sub(f)=\asc(\Psi(f))$.
\end{theorem}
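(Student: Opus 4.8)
The plan is to verify the three assertions of the theorem in the order they are listed, leaning on the explicit reversibility of the construction of $\Psi$ that was already laid out in the paragraphs preceding the statement.

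\textbf{Bijectivity.} First I would recall that the construction of $\Psi$ proceeds in three reversible stages: (i) from $f\in\wtFunc_{n,k}$ to its decorated functional digraph, which by definition of $\wtFunc_{n,k}$ assigns to each $i\in[n-1]$ a unique out-edge with a label in $[k]$ such that no two edges into a common target share a label; (ii) from the functional digraph to a decorated tree on $[n]$ with edge labels in $[k]$, by cutting the edge out of the maximal element $a_i$ of each directed cycle and re-attaching as described, and (iii) from the decorated tree to an element of $\ptree_{n,k}$ by turning an edge labeled $l$ from $i$ to $j$ into ``$i$ is the $l$-th child of $j$''. Stage (iii) is exactly the natural bijection $\dtree_{n,k}\to\ptree_{n,k}$ recorded in Section~\ref{sec:trees}, so it is a bijection. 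For stage (i), the map $f\mapsto$ digraph is manifestly invertible: from the digraph one reads off $f(i)=(a,b)$ where $(i,a)$ is the edge out of $i$ with label $b$; the two bulleted structural features (a unique tree component rooted at $n$, all other components being cycles of rooted trees) are exactly the shape of a functional digraph of a partial function defined on $[n-1]$ with values in $[n]$, injective after forgetting the colour, which is what $\wtFunc_{n,k}$ encodes. For stage (ii), the excerpt already gives the inverse explicitly: along the root-to-$n$ path $j_1,\dots,j_m$ one locates consecutive records $s<t$, deletes the edge $j_{t-1}\to j_t$ (remembering its label $c$) and re-inserts $j_s\to j_{t-1}$ with label $c$; I would just check that this undoes the cut-and-reattach and that the records of the path are precisely the roots $a_1,\dots,a_r$ of the components in left-to-right order (each $a_i$ is by construction the largest label in its cycle and the components are ordered by these maxima, so reading the spine left to right the record values are exactly $a_1<a_2<\cdots$, with $a_r=n$ the last record). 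This simultaneously proves the second assertion: the number of components of the functional digraph of $f$ equals $r$, which equals the number of records on the spine of $\Psi(f)$, i.e. $\rec(\Psi(f))$.

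\textbf{Statistics.} For the last assertion I would track a single edge of the digraph through stages (ii) and (iii). Fix $i\in[n-1]$ with $f(i)=(a,b)$, so there is an edge $i\to a$ with label $b$. The cut-and-reattach of stage (ii) only changes \emph{which} vertex a cut edge points to, never its label and never its source; more precisely, if the edge $i\to a$ is not one of the cut edges it is unchanged, and if it is cut (so $i=a_p$ for some $p$) then it becomes $a_{p+1}\to b_p$ still labeled $c_p=b$. In the resulting decorated tree, stage (iii) makes the relevant vertex the $b$-th child of its parent. The key point is that $i$ being a $b$-excedance of $f$ means $a>i$, and $i$ being a $b$-subcedance means $a\le i$; after stage (ii) the edge out of $i$ (when $i$ is not a cycle maximum) is untouched, so its parent is $a$ and $i$ is the $b$-th child of $a$, giving a $b$-descent of $\Psi(f)$ exactly when $a>i$, i.e. exactly when $i$ is a $b$-excedance. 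So I must handle the cycle-maxima $i=a_p$, $1\le p\le r-1$ (note $a_r=n$ is never a source). Here $i=a_p$ is, by construction, the largest label in its cycle, hence $a_p>b_p$ where $b_p$ is what the edge pointed to, so $i=a_p$ is a $c_p$-\emph{excedance} of $f$ (with $a_p>b_p\ge$ anything, in particular $a_p>$ the index). Meanwhile after reattachment the edge is $a_{p+1}\to b_p$ with label $c_p$, and since $a_{p+1}$ is a record strictly larger than all earlier records including $b_p$ (because $b_p$ lies among earlier vertices on the spine — it is in component $T_p$, whose maximum $a_p<a_{p+1}$), we have $a_{p+1}>b_p$, so $b_p$ being the $c_p$-th child of the larger vertex $a_{p+1}$ is a $c_p$-descent of $\Psi(f)$. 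Thus cut edges contribute a $c_p$-descent of the tree and were $c_p$-excedances of $f$, so the bookkeeping still matches. An entirely parallel argument with the inequality reversed — now ranging over $i$ with $a\le i$, none of which are cycle maxima — shows $b$-subcedances of $f$ correspond to $b$-ascents of $\Psi(f)$. Summing over all edges and grouping by the label $b\in[k]$ yields $\exc(f)=\dsc(\Psi(f))$ and $\sub(f)=\asc(\Psi(f))$ as weak compositions.

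\textbf{Main obstacle.} The routine parts — invertibility of stages (i) and (iii), and that the digraph components are exactly ``tree at $n$ plus cycles of rooted trees'' — are standard functional-digraph bookkeeping. The genuine content, and the step I expect to be most delicate to write carefully, is the interaction of stage (ii) with the records/excedance dictionary: one must argue that cutting precisely at the cycle maxima and reattaching at the next component's root is exactly what is inverted by cutting between consecutive spine records, and that in both the ``untouched edge'' and ``cut edge'' cases the inequality ($a>i$ versus $a\le i$) is preserved as the descent/ascent condition on the tree. This hinges on the monotonicity $a_1<a_2<\cdots<a_r=n$ of the cycle maxima and the observation that $b_p$ (the old target of the cut edge out of $a_p$) is strictly below $a_{p+1}$, which is what forces a cut edge to always become a descent matching its status as an excedance. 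Once that monotonicity is pinned down, the rest is a direct case check, and I would present it as two short lemmas (one identifying components-with-spine-records, one matching a single edge) followed by a one-line summation.
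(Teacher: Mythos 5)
Your overall route is the same as the paper's: invert $\Psi$ by locating the spine, observe that only the cut-and-reattached edges change while all other edges of the decorated digraph pass unaltered into the tree, and then do edge-by-edge bookkeeping of the label together with the relevant inequality. The bijectivity and record-counting parts are fine (and match the paper, which disposes of them in one sentence). However, your analysis of the cut edges contradicts the paper's definitions. With $f(a_p)=(b_p,c_p)$ and $b_p\le a_p$ (since $a_p$ is the maximum of its cycle and $b_p$ lies on that cycle), the index $a_p$ is a $c_p$-\emph{subcedance} of $f$, not a $c_p$-excedance: an excedance requires the first coordinate of $f(i)$ to exceed $i$. Likewise, the reattached edge is drawn from $a_{p+1}$ to $b_p$, so by the decorated-tree convention $a_{p+1}$ is the $c_p$-th child of $b_p$ (you have parent and child reversed); since $a_{p+1}>b_p$, this edge is a $c_p$-\emph{ascent} of $\Psi(f)$, not a descent. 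Finally, your claim that no $i$ with $f(i)=(a,b)$, $a\le i$, is a cycle maximum is false: every cycle maximum $a_p$ with $p\le r-1$ is exactly such an $i$, so as written your subcedance--ascent tally omits precisely the edges it must account for.

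The reason your conclusion is nonetheless true is that your two misclassifications are parallel swaps: the correct statement is that each cut edge is a $c_p$-subcedance of $f$ and becomes a $c_p$-ascent of $\Psi(f)$ (label preserved, and both before and after the edge points weakly from a larger vertex to a smaller one, possibly a loop), while uncut edges are literally unchanged, giving excedance $\leftrightarrow$ descent and subcedance $\leftrightarrow$ ascent. That corrected case analysis is exactly the paper's proof, so the fix is local; but the intermediate assertions about which side of the ledger the cut edges sit on, and the claim that cycle maxima are never subcedance sources, need to be repaired before the argument is sound.
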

\begin{proof}

We have already established earlier that $\Psi$ is a bijection, and it is implicit in the argument above that the number of connected components in the functional digraph corresponding to $f\in \wtFunc_{n,k}$ is $\rec(\Psi(f))$.
Now let $\exc(f)=(\alpha_1,\ldots,\alpha_k)$ and $\sub(f)=(\beta_1,\ldots,\beta_k)$. Additionally, let $\asc(\Psi(f))=(\gamma_1,\ldots,\gamma_k)$ and $\dsc(\Psi(f))=(\delta_1,\ldots,\delta_k)$.

For the length of this proof, let $T:=\Psi(f)$. Note that we have
\begin{align*}
&\delta_i=\text{\# edges $b\to a$ in $T$ such that $a>b$ and $b$ is the $i$-th child of $a$},\\
&\alpha_i=\text{\# edges $b\to a$ in functional digraph associated to $f$ such that $a>b$ and edge is labeled $i$}.
\end{align*}
In recovering $f$ from $T$, it suffices to recover the functional digraph of $f$. To do this, we identify the spine of $T$, and redirect certain edges leaving the rest unaltered. Observe that the redirected edges do not correspond to descents in $T$, and additionally, upon redirection they point from a larger number to a smaller number (or to itself), in the decorated functional digraph thus obtained. Thus, it follows that 
$\delta_i=\alpha_i$, and therefore that $\dsc(T)=\exc(f)$.

Now we will establish that $\asc(T)=\sub(f)$ with a similar argument. Note that in going back to the decorated functional digraph from $T$, we redirect certain ascent edges $b\to a$ where $b$ is the $i$-th child of $a$ and $b>a$. But despite the redirection, they continue to point from a larger number to a smaller number and the edge label is $i$, inherited from before. This immediately implies that $\asc(T)=\sub(f)$.
\end{proof}

As a corollary of the above theorem, we obtain the following.
\begin{corollary}
We have that $
G_{n,k}^{(1)}=G_{n,k}^{(2)}$.
\end{corollary}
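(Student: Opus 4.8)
The plan is to deduce the corollary directly from the preceding theorem by reindexing one of the two defining sums along the bijection $\Psi$. Recall that, by definition,
\[
G^{(1)}_{n,k}=\sum_{g\in\wtFunc_{n,k}}\alpu^{\exc(g)}\alpv^{\sub(g)},
\qquad
G^{(2)}_{n,k}=\sum_{T\in\ptree_{n,k}}\alpu^{\dsc(T)}\alpv^{\asc(T)},
\]
so both are elements of the polynomial ring in the commuting variables $\alpu$ and $\alpv$, and every monomial occurring has total degree $n-1$ (since $|\exc(g)|+|\sub(g)|=n-1$ on the one side and $|\dsc(T)|+|\asc(T)|=n-1$ on the other).

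First I would invoke the theorem to get that $\Psi\colon\wtFunc_{n,k}\to\ptree_{n,k}$ is a bijection satisfying $\exc(f)=\dsc(\Psi(f))$ and $\sub(f)=\asc(\Psi(f))$ for every $f\in\wtFunc_{n,k}$. In particular, for each such $f$ the monomials $\alpu^{\exc(f)}\alpv^{\sub(f)}$ and $\alpu^{\dsc(\Psi(f))}\alpv^{\asc(\Psi(f))}$ are literally equal. Substituting $T=\Psi(f)$ and using bijectivity of $\Psi$ to re-express the index set of the sum for $G^{(2)}_{n,k}$ then gives
\[
G^{(2)}_{n,k}=\sum_{f\in\wtFunc_{n,k}}\alpu^{\dsc(\Psi(f))}\alpv^{\asc(\Psi(f))}
=\sum_{f\in\wtFunc_{n,k}}\alpu^{\exc(f)}\alpv^{\sub(f)}=G^{(1)}_{n,k},
\]
which is the assertion.

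There is essentially no obstacle here: all of the work has been front-loaded into the theorem, and the corollary is a purely formal consequence of the fact that $\Psi$ transports the pair of statistics $(\exc,\sub)$ to the pair $(\dsc,\asc)$. The only points worth a sanity check are bookkeeping — that $\exc$ and $\sub$ (resp. $\dsc$ and $\asc$) really are $k$-tuples of nonnegative integers so that the monomials live in the same ring, and that one uses the bijection rather than merely a statistic-preserving map so that no term is lost or double counted. If desired, the same argument refines to an identity tracking $\rec$ by a further variable, since the theorem also identifies $\rec(\Psi(f))$ with the number of connected components of the functional digraph of $f$; but this stronger statement is not needed for the equality as stated.
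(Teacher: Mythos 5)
Your argument is correct and is exactly the (implicit) proof in the paper: the corollary is stated there as an immediate consequence of the theorem on $\Psi$, obtained by reindexing the sum defining $G^{(2)}_{n,k}$ along the bijection and using $\exc(f)=\dsc(\Psi(f))$, $\sub(f)=\asc(\Psi(f))$. Nothing further is needed.
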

In view of the above corollary, we define $G_{n,k}:=G_{n,k}^{(1)}=G_{n,k}^{(2)}$ to be the \emph{Gessel polynomial} indexed by positive integers $n$ and $k$.

\begin{remark}\label{remark: work of Gessel}
The polynomials $G_{n,2}$ have been studied by Gessel \cite{Gessel-unpublished}, Kalikow \cite{Kalikow} and Drake \cite{Drake-thesis}. More precisely, the following generating function was considered by the aforementioned authors.
\begin{align*}
B(u_1,u_2,v_1,v_2;x)=\sum_{n\geq 1}G_{n,2}(u_1,u_2,v_1,v_2)\frac{x^n}{n!}
\end{align*}
It develops that $B:=B(u_1,u_2,v_1,v_2;x)$ satisfies the following functional equation.
\begin{align*}
\frac{(1+v_1B)(1+u_2B)}{(1+v_2B)(1+u_1B)}=e^{[(v_1u_2-v_2u_1)B+v_1-v_2-u_1+u_2]x}
\end{align*}
It is worthwhile to note the non-obvious symmetry in $G_{n,2}$ from the equation above, namely that  $$G_{n,2}(u_1,u_2,v_1,v_2)=G_{n,2}(u_1,v_1,u_2,v_2).$$ A recursive proof has been obtained by Kalikow \cite{Kalikow}. Note further that, from classical results of rooted labeled binary trees, we know that $G_{n,2}(1,1,1,1)=n! C_n$ where $C_n=\frac{1}{n+1}\binom{2n}{n}$ denotes the $n$-th Catalan number. Additionally, $G_{n,2}(1,1,1,0)=(n+1)^{n-1}$, $G_{n,2}(1,0,1,0)=n!$ and $G_{n,2}(u_1,u_2,0,0)=\sum_{\sigma\in \mathfrak{S}_n}u_1^{\dsc(\sigma)}u_2^{\asc(\sigma)}$ (the $n$-th homogenized Eulerian polynomial). The careful reader might recall that the number of regions in the Catalan arrangement is $n! C_n$, the number of regions in the Shi arrangement is $(n+1)^{n-1}$, and the number of regions in the braid arrangement is $n!$.
\end{remark}

\subsection{Drake's results}
In this subsection we describe a beautiful result of Drake \cite{Drake-thesis} which allows us to compute the generating function for the ascent-descent statistics on $\ptree_{n,k}$. First, consider the following generating function.
\begin{align*}
P_k(\alpu,\alpv;x)=\sum_{n\geq 1}\sum_{T\in \ptree_{n,k}}\alpu^{\dsc(T)}\alpv^{\asc(T)}\frac{x^n}{n!}=\sum_{n\geq 1}G_{n,k}\frac{x^n}{n!}
\end{align*}
The following theorem of Drake \cite[Theorem 1.8.4]{Drake-thesis} expresses $P_k:=P_k(\alpu,\alpv;x)$ as a compositional inverse of another function.
% the exact place is Theorem 1.8.4 in Drake's thesis.
\begin{theorem}[\cite {Drake-thesis}]\label{thm: drake}
Consider an auxiliary function $Z(a,b,c,d)=(ad-bc)x+(a-c)-(b-d)$. Then we have that
\begin{align*}
P_k=\left(\sum_{i=1}^{k}(v_i-u_i)^{k-2}\log{\left(\frac{1+v_ix}{1+u_ix}\right)}\prod_{j\in [k], i\neq j}\frac{1}{Z(v_i,u_i,v_j,u_j)}\right)^{\langle -1 \rangle}
\end{align*}
\end{theorem}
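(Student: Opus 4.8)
Since $P_k = x+O(x^2)$ (the only tree on one node contributes $G_{1,k}=1$), the asserted identity is equivalent to $F(P_k)=x$, where
\begin{align*}
F(x)=\sum_{i=1}^{k}(v_i-u_i)^{k-2}\log\!\left(\frac{1+v_ix}{1+u_ix}\right)\prod_{j\in[k],\,i\neq j}\frac{1}{Z(v_i,u_i,v_j,u_j)}
\end{align*}
is the series whose compositional inverse is being claimed. I would prove $F(P_k)=x$ in two stages: first establish a functional equation for $y:=P_k$ by a weight-preserving decomposition of $\ptree_{n,k}$ (equivalently, of the functional-digraph model $\wtFunc_{n,k}$ via the bijection $\Psi$), and then rewrite that functional equation in the stated partial-fraction form. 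As a guide, when $k=2$ the identity $F(P_2)=x$ is, after clearing the single denominator and exponentiating, precisely the functional equation for $B=P_2$ recorded in Remark~\ref{remark: work of Gessel}: the two summands in $F$ collapse because $Z(v_2,u_2,v_1,u_1)=-Z(v_1,u_1,v_2,u_2)$, giving $F=Z(v_1,u_1,v_2,u_2)^{-1}\log\frac{(1+v_1x)(1+u_2x)}{(1+u_1x)(1+v_2x)}$. So the whole argument should be read as a $k$-fold generalization of the (known) $k=2$ computation of Gessel--Kalikow--Drake.

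\textbf{The functional equation.} The obstruction to a naive recursion is that the weight $u_i$ versus $v_i$ on the edge joining a node to its $i$-th child depends on the \emph{relative order} of labels lying in different subtrees, which an exponential generating function does not see. I would resolve this in one of two equivalent ways. (a) Decompose $T\in\ptree_{n,k}$ at its root while carrying a catalytic variable recording the root label relative to the rest of the tree; the root contributes, in slot $i$, either an $i$-descent or an $i$-ascent according to that variable, and the hanging subtrees contribute smaller instances of the same structure, producing a functional equation in the catalytic variable that one then eliminates (kernel/obstinate-substitution method) to leave a relation purely between $x$ and $y$. (b) Pass through $\wtFunc_{n,k}$: here $i$ is a $j$-excedance of $f$ exactly when $f(i)=(a,j)$ with $a>i$, a genuinely local condition, and the only global constraint is injectivity of $f$; decomposing the functional digraph of $f$ into its connected components (a tree rooted at $n$, together with a mapping on the complementary label set, each with the "distinct label per in-edge" restriction) and assembling along the spine at the records --- which is exactly the content of $\Psi$ and $\Psi^{-1}$ --- again yields the functional equation, now with $\rec(\Psi(f))$ tracked by an auxiliary exponent so that the assembly is a clean substitution. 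Either route should deliver an equation of the shape $x=\sum_{i=1}^k(\text{a }\log\text{-term in direction }i)/(\text{a product over }j\neq i\text{ of the forms }Z(v_i,u_i,v_j,u_j))$.

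\textbf{The algebraic step and checks.} What remains is to recognize the equation just obtained as the claimed formula for $F$. The coefficient $(v_i-u_i)^{k-2}\prod_{j\neq i}Z(v_i,u_i,v_j,u_j)^{-1}$ is exactly the residue-type weight attached to direction $i$ in a partial-fraction expansion over the $k$ linear forms $Z(v_i,u_i,v_j,u_j)=(v_iu_j-u_iv_j)x+(v_i-u_i)-(v_j-u_j)$, so collapsing that expansion produces $F$. Two checks fix the normalization: (i) the $k=2$ specialization must reduce to Remark~\ref{remark: work of Gessel}, which it does by the collapse noted above; and (ii) the coefficient of $x$ in $F$ must equal $1$, which follows from evaluating every $Z$ at $x=0$ and invoking the standard identity $\sum_{i=1}^k (v_i-u_i)^{k-1}/\prod_{j\neq i}((v_i-u_i)-(v_j-u_j))=1$. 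One can further test the formula on the specializations in Remark~\ref{remark: work of Gessel}; for instance $v_1=v_2=0$ gives $F(x)=\frac{1}{u_2-u_1}\log\frac{1+u_2x}{1+u_1x}$, whose compositional inverse $\frac{e^{(u_2-u_1)x}-1}{u_2-u_1e^{(u_2-u_1)x}}$ is the exponential generating function of the homogenized Eulerian polynomials, as it must be since $G_{n,2}(u_1,u_2,0,0)=\sum_{\sigma\in\mathfrak{S}_n}u_1^{\dsc(\sigma)}u_2^{\asc(\sigma)}$.

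\textbf{Main obstacle.} The real work is the combinatorial stage: making the root-catalytic decomposition (or the spine/record assembly in $\wtFunc_{n,k}$) precise while bookkeeping the $u_i/v_i$ weights correctly, and identifying the resulting single-direction contribution with the term $(v_i-u_i)^{k-2}\log\big(\tfrac{1+v_ix}{1+u_ix}\big)\prod_{j\neq i}Z(v_i,u_i,v_j,u_j)^{-1}$ so that the partial-fraction collapse in the last stage is exact rather than merely plausible. Once the correct functional equation is in hand, the remainder is routine power-series and partial-fraction manipulation.
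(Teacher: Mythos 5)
First, note that the paper itself offers no proof of this statement: it is quoted verbatim from Drake's thesis \cite[Theorem 1.8.4]{Drake-thesis}, so your proposal has to stand on its own, and as written it does not. The reduction to $F(P_k)=x$, the $k=2$ collapse to the functional equation of Remark \ref{remark: work of Gessel}, the check that $[x^1]F=1$ via $\sum_i z_i^{k-1}/\prod_{j\neq i}(z_i-z_j)=1$, and the Eulerian specialization are all correct, but these are consistency checks, not the proof. The entire content of the theorem lies in the step you defer: you assert that a root decomposition with a catalytic variable, or alternatively the functional-digraph/spine assembly underlying $\Psi$, ``should deliver an equation of the shape'' $x=\sum_i(\cdots)$, and neither route is carried out. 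Both face real obstructions. In route (a), the weight $u_i$ versus $v_i$ on an edge is decided by comparing a node's label with its $i$-th child's label only, so a single catalytic variable recording ``root label relative to the rest of the tree'' does not capture the data needed when the subtrees are relabeled in the exponential product formula; it is not clear what equation the kernel method would even be applied to. In route (b), the component decomposition and record/spine assembly is exactly what proves $\exc=\dsc\circ\Psi$ and $\sub=\asc\circ\Psi$ (the paper's Theorem 4.3), but it is statistic-preserving bookkeeping, not a recursive structure on $\wtFunc_{n,k}$, and it does not by itself produce a relation between $x$ and the series $P_k$. Finally, identifying the hypothetical equation with the stated partial-fraction form is again only asserted (``collapsing that expansion produces $F$'').

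It is also worth knowing that Drake's actual argument is of a different nature: he proves a general inversion theorem for labeled trees weighted by ascent/descent data, established by a sign-reversing involution in the spirit of Gessel's combinatorial approach to Lagrange inversion, which expresses the compositional inverse of the tree series as a signed sum over a restricted family of trees/forests; the closed form with the linear forms $Z(v_i,u_i,v_j,u_j)$ then comes from summing that signed series, not from a kernel-method elimination of a catalytic variable. So if you want to complete your outline, the missing middle step is precisely where a new idea (or an appeal to Drake's inversion theorem) is required; everything you have written around it is sound but peripheral.
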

The reader is invited to compare the above functional equation to the one obtained for $P_2$ in Remark \ref{remark: work of Gessel}. 
We will now use the above result to compute the cardinality of $\ltree_{n,k}$ where $k\geq 2$. Thus, we are counting the number of trees in $\ptree_{n,k}$ that constrained to have no $1$-ascents and no $k$-descents. Let $l_{n,k}$ denote this number, that is, $l_{n,k}=|\ltree_{n,k}|$. We can obtain the exponential generating function for these number by evaluating $P$ at the values $u_k=0,v_1=0$ and all the other variables set to $1$. 
Consider
\begin{align*}
\widetilde{P}_k=\sum_{n\geq 1}l_{n,k}\frac{x^n}{n!}.
\end{align*}
Then from Theorem \ref{thm: drake}, we get the following functional equation for $\widetilde{P}_k$ (the details are similar to the many examples discussed in \cite[Chapter 1.9]{Drake-thesis}).
\begin{align*}
\widetilde{P}_k=\left(\frac{2\log(1+x)}{(1+x)^{k-2}(2+x)}\right)^{\langle -1\rangle}
\end{align*}
Substituting $f=1+\widetilde{P}_k$, we get that $f$ satisfies the following relation.
\begin{align*}
f^2=e^{x(f^{k-2}+f^{k-1})}
\end{align*}
We see that the functional equation above is precisely the one in the statement of \cite[Theorem 9.3]{Postnikov-Stanley} with $b=k$ and $a=k-2$. Thus we see that the coefficient of $\frac{x^n}{n!}$ in $f$ for $n\geq 1$, that is, $l_{n,k}$,
is given by the following expression.
\begin{align}\label{eqn: local k-ary search trees}
l_{n,k}=\frac{1}{2^n}\sum_{j=0}^{n}\binom{n}{j}(1+(k-2)n+j)^{n-1}
\end{align}
\begin{remark}
In the case $k=2$, we are in fact counting local binary search trees, and the formula obtained above coincides with the formula obtained by Postnikov \cite{Postnikov-JCTA} and Athanasiadis \cite[Theorem 4.2]{Athanasiadis-Advances}.
\end{remark}

Now we return to our question of counting rook placements on skew Ferrers boards. Consider the sequence of Linial boards $L_{t,n}$ for $t=(a-1)n+2$ where $a$ ranges over all positive integers. For a fixed positive integer $a$, maximal nonattacking rook placements on $L_{(a-1)n+2,n}$ are clearly in bijection with the subset of $\Func_{n,a+1}$ that comprises injective functions $f:[n-1] \to [(a+1)n]$ satisfying the criterion $i+1\leq f(i)\leq an+i$ for $1\leq i\leq n-1$. But by our bijection $\Psi$ from before, the number of such rook placements is the number of trees $T\in \ptree_{n,a+1}$ such that there are no $1$-ascents and no $a+1$-descents. The condition $i+1\leq f(i)$ implies that $T$ can have no $1$-ascents while the condition $f(i)\leq an+i$ implies that $T$ can have no $a+1$-descents. This motivates our next theorem that gives a closed form expression for the factorial polynomial of the Linial board $L_{0,n}$.

\begin{theorem}\label{thm: factorial rook polynomial Linial board}
We have that
\begin{align*}
R(t,L_{0,n})=\frac{1}{2^n}\sum_{j=0}^{n}\binom{n}{j}(t-1+j)^{n-1},
\end{align*}
\begin{proof}
Note that since $L_{0,n}$ has $n-1$ rows, the factorial polynomial $R(t,L_{0,n})$ is a polynomial of degree $n-1$ by its definition. Furthermore, its definition also implies that the evaluations of $R(t,L_{0,n})$ at $t=m$ for $m$ a nonnegative integer agree with $r_{n-1}(L_{m,n})$. By comparison with Equation \eqref{eqn: local k-ary search trees}, we know that 
\begin{align*}
r_{n-1}(L_{m,n})=\frac{1}{2^n}\sum_{j=0}^{n}\binom{n}{j}(1+(m-2)+j)^{n-1}
\end{align*}
when $m$ is of the form $(a-1)n+2$ for a positive integer $a\geq 1$. Thus the polynomial of degree $n-1$ below:
\begin{align*}
\frac{1}{2^n}\sum_{j=0}^{n}\binom{n}{j}(1+(t-2)+j)^{n-1},
\end{align*}
agrees with $R(t,L_{0,n})$ for infinitely many integer values $t$. Therefore the two must be equal.
\end{proof}
\end{theorem}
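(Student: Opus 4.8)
The plan is to establish the claimed identity by exhibiting two polynomials of degree $n-1$ that agree on infinitely many integer points, namely on all integers of the form $t = (a-1)n + 2$ for $a \geq 1$. Since a polynomial of degree $n-1$ is determined by its values on any $n$ points, agreement on an infinite set forces equality.

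\medskip

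\textbf{Step 1: The left-hand side is a polynomial of degree $n-1$ whose integer evaluations count rook placements.} The board $L_{0,n}$ has $n-1$ rows. By the definition of the factorial polynomial, $R(t, L_{0,n}) = \sum_{k \geq 0} r_k(L_{0,n})(t)_{n-1-k}$, which is manifestly a polynomial in $t$ of degree $n-1$. Moreover, for any nonnegative integer $m$, the evaluation $R(m, L_{0,n})$ equals $r_{n-1}(L_{m,n})$: this is the standard Goldman--Joichi--White identity relating the $m$-factorial polynomial of a board to the count of maximal rook placements on the board augmented by $m$ columns, exactly as recorded earlier in the excerpt in the displayed formulas for $r_{n-1}(C_{t,n})$ and $r_{n-1}(S_{t,n})$. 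Here the passage from $L_{0,n}$ to $L_{m,n}$ adds $m$ columns on the right of each row, which is precisely the Catalan-board augmentation built into the factorial polynomial.

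\medskip

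\textbf{Step 2: Evaluate $r_{n-1}(L_{m,n})$ when $m = (a-1)n+2$ via the tree bijection.} For such $m$, maximal nonattacking rook placements on $L_{(a-1)n+2,n}$ correspond, under identification with functions, to injections $f : [n-1] \hookrightarrow [(a+1)n]$ with $i+1 \leq f(i) \leq an + i$ for all $i$; this is the skew-shape constraint translated into the language of $\Func_{n,a+1}$, as explained in the paragraph preceding the theorem. Applying the bijection $\Psi$ and its compatibility with the excedance/ascent statistics, these rook placements biject with trees in $\ptree_{n,a+1}$ having no $1$-ascents and no $(a+1)$-descents, i.e. with $\ltree_{n,a+1}$. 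By Equation~\eqref{eqn: local k-ary search trees} with $k = a+1$, this cardinality is
\begin{align*}
l_{n,a+1} = \frac{1}{2^n}\sum_{j=0}^{n}\binom{n}{j}\bigl(1 + (a-1)n + j\bigr)^{n-1} = \frac{1}{2^n}\sum_{j=0}^{n}\binom{n}{j}\bigl(m - 1 + j\bigr)^{n-1},
\end{align*}
using $m = (a-1)n + 2$.

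\medskip

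\textbf{Step 3: Conclude by polynomial interpolation.} The right-hand side of the claimed identity, $\frac{1}{2^n}\sum_{j=0}^{n}\binom{n}{j}(t-1+j)^{n-1}$, is a polynomial in $t$ of degree at most $n-1$. By Steps 1 and 2 it agrees with the degree-$(n-1)$ polynomial $R(t, L_{0,n})$ at every integer $t$ of the form $(a-1)n+2$, $a \geq 1$ — an infinite set. Two polynomials of degree at most $n-1$ agreeing at more than $n-1$ points coincide identically, which gives the theorem.

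\medskip

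\textbf{The main obstacle} is not any single hard computation but rather the careful bookkeeping in Step 2: one must verify precisely that the skew-shape defining $L_{(a-1)n+2,n}$ translates to the bounds $i+1 \leq f(i) \leq an+i$, that these bounds correspond under $\Psi$ exactly to the exclusion of $1$-ascents and $(a+1)$-descents, and that the index shift $k = a+1$ is applied consistently in Equation~\eqref{eqn: local k-ary search trees}. Each of these is routine given the machinery already developed, but an off-by-one error anywhere would break the final interpolation; all the genuine content has been front-loaded into the bijection $\Psi$ and into Postnikov--Stanley's formula underlying \eqref{eqn: local k-ary search trees}.
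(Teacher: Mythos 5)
Your argument is correct and is essentially the paper's own proof: identify $R(t,L_{0,n})$ as a degree-$(n-1)$ polynomial whose evaluations at nonnegative integers $m$ give $r_{n-1}(L_{m,n})$, compute these counts at $m=(a-1)n+2$ via $\Psi$ and Equation \eqref{eqn: local k-ary search trees} with $k=a+1$, and conclude by interpolation at infinitely many points. The one clause to phrase carefully is in your Step 1: the reason $R(m,L_{0,n})=r_{n-1}(L_{m,n})$ is not merely that each row gains $m$ cells on its right (for a general skew board with larger stagger that statement fails), but that, read column-wise, the added cells of $L_{m,n}$ form exactly $m$ columns of full height $n-1$ while the remaining columns meet the same sets of rows as in $L_{0,n}$, which is precisely the augmentation encoded in the factorial polynomial.
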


Of course, given the similarity between the expression obtained in the theorem above and \cite[Equation 9.11]{Postnikov-Stanley}\cite[Theorem 4.2]{Athanasiadis-Advances}, the next corollary is no surprise.
\begin{corollary}\label{cor: relation factorial and characteristic}
Given a positive integer $a$, the following hold.
\begin{align*}
(-1)^{n-1}\chi_{n-1}^{a-1,a+1}(q)&=R(1+(a-1)n-q, L_{0,n})\\
(-1)^{n-1}\chi_{n-1}^{a-1,a+1}(q)&=\sum_{\sigma\in \Pi_{n-1}}\mu(\hat{0},\sigma)\prod_{A\in\sigma}(an-q-1+\min(A)-\max(A))
\end{align*}
\end{corollary}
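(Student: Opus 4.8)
The plan is to deduce the Corollary from Theorem~\ref{thm: factorial rook polynomial Linial board}, the Goldman--Joichi--White expansion (Theorem~\ref{thm: GJW-v}), and the known closed form for $\chi_{n-1}^{a-1,a+1}(q)$; essentially the corollary just repackages the external characteristic-polynomial formula through rook theory.

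\emph{First identity.} Recall from \cite[Equation 9.11]{Postnikov-Stanley} (see also \cite[Theorem 4.2]{Athanasiadis-Advances}) that the characteristic polynomial of the extended Linial arrangement $\hyplin_{n-1,a}=\mathcal{A}_{n-1}^{a-1,a+1}$ admits the closed form
\begin{align*}
\chi_{n-1}^{a-1,a+1}(q)=\frac{(-1)^{n-1}}{2^n}\sum_{j=0}^{n}\binom{n}{j}\bigl((a-1)n+j-q\bigr)^{n-1}.
\end{align*}
On the other hand, substituting $t=1+(a-1)n-q$ into the formula of Theorem~\ref{thm: factorial rook polynomial Linial board} gives $R(1+(a-1)n-q,L_{0,n})=\tfrac{1}{2^{n}}\sum_{j=0}^{n}\binom{n}{j}\bigl((a-1)n+j-q\bigr)^{n-1}$. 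Comparing the two displays and multiplying by $(-1)^{n-1}$ yields the first identity. (If the quoted formula for $\chi_{n-1}^{a-1,a+1}$ appears in the literature with a different index convention, a reindexing $j\mapsto n-j$ reconciles it with the form above.)

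\emph{Second identity.} Apply Theorem~\ref{thm: GJW-v} to the board $B=L_{0,n}$, which has $m=n-1$ rows, obtaining
\begin{align*}
R(x,L_{0,n})=\sum_{\sigma\in\Pi_{n-1}}\mu(\hat{0},\sigma)\prod_{A\in\sigma}\bigl(x+v_{L_{0,n}}(A)\bigr).
\end{align*}
The crux is to evaluate $v_{L_{0,n}}(A)$. Reading off $L_{0,n}$, whose shape is $(2n-3,\dots,n-1)/(n-1,\dots,1)$, in the ``$i$-th row from the bottom'' convention (with the left-justification illustrated in the $n=3$, $t=2$ example), one checks that row $i$ occupies precisely the $n-2$ consecutive columns $i+1,i+2,\dots,n-2+i$. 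Hence for nonempty $A\subseteq[n-1]$ the columns lying in every row indexed by $A$ form the intersection $\bigcap_{i\in A}[\,i+1,\,n-2+i\,]=[\,\max A+1,\,n-2+\min A\,]$, which is nonempty because $\max A-\min A\le n-2$; therefore
\begin{align*}
v_{L_{0,n}}(A)=(n-2+\min A)-(\max A+1)+1=n-2+\min(A)-\max(A).
\end{align*}
Substituting $x=1+(a-1)n-q$ turns each block factor into $x+v_{L_{0,n}}(A)=an-q-1+\min(A)-\max(A)$, so
\begin{align*}
R(1+(a-1)n-q,L_{0,n})=\sum_{\sigma\in\Pi_{n-1}}\mu(\hat{0},\sigma)\prod_{A\in\sigma}\bigl(an-q-1+\min(A)-\max(A)\bigr),
\end{align*}
and combining this with the first identity completes the proof.

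\emph{Main obstacle.} The argument is bookkeeping once the two theorems are in hand; the one place demanding care is describing the Linial board $L_{0,n}$ correctly under the paper's row/column conventions and thereby computing $v_{L_{0,n}}(A)$ --- in particular, checking that the relevant intersection of column intervals is nonempty (guaranteed by $\max A-\min A\le n-2$) so that the naive length count is valid. The remaining points to watch are the affine change of variables $t=1+(a-1)n-q$, the global sign $(-1)^{n-1}$, and quoting the external formula for $\chi_{n-1}^{a-1,a+1}$ in a normalization compatible with Theorem~\ref{thm: factorial rook polynomial Linial board}.
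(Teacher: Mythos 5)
Your argument is correct and is essentially the paper's own proof: the first identity by comparing the closed form of Theorem~\ref{thm: factorial rook polynomial Linial board} under $t=1+(a-1)n-q$ with the Postnikov--Stanley formula for $\chi_{n-1}^{a-1,a+1}$, and the second by applying Theorem~\ref{thm: GJW-v} to $L_{0,n}$ with $v_{L_{0,n}}(A)=n-2-(\max(A)-\min(A))$, which you verify from the row/column description of the board. The only nitpick is that when $\max A-\min A=n-2$ the column interval is actually empty, but its length count is then $0=v_{L_{0,n}}(A)$, so the formula still holds.
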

\begin{proof}
The first equality follows from comparing the expression for $\chi_{n-1}^{a-1,a+1}$ in \cite[Equation 9.11]{Postnikov-Stanley} to the expression for $R(t,L_{0,n})$ in the  statement of Theorem \ref{thm: factorial rook polynomial Linial board}. 

For the second equality, we apply Theorem \ref{thm: GJW-v} to obtain an expression for the factorial polynomial of $L_{0,n}$, and use the fact that the statistic $v_{L_{0,n}}(A)=n-2-(\max(A)-\min(A))$ for $A\subseteq [n-1]$.
\end{proof}
Note that in view of the above corollary and Equation \eqref{eqn: catshi2}, we have expressed characteristic polynomials of the extended Catalan, extended Shi and extended Linial arrangements as sums over the lattice of set partitions. It is interesting to note that similar results were achieved in \cite{Forge-and-gang} from a very different perspective, and the equivalence with our results is not obvious. Furthermore, the case $a=1$ in the above corollary implies Theorem \ref{thm: intro 2} from the introduction. Finally, the case $a=1$ and $q=-1$ gives us the following alternate formula for the number of local binary search trees on $n$ nodes.
\begin{align}\label{eqn: alternate lbs}
|\ltree_{n,2}|=\sum_{\sigma\in \Pi_{n-1}}\mu(\hat{0},\sigma)\prod_{A\in\sigma}(n+\min(A)-\max(A))
\end{align}
Admittedly, this is not as nice as the more well-known closed form $\frac{1}{2^n}\sum_{j=0}^{n}\binom{n}{j}(1+j)^{n-1}$, but it would be interesting to relate the said closed form to the expression in Equation \eqref{eqn: alternate lbs}.

Now we are ready to state our main enumerative result, a special case of which was present in the introduction in the form of Theorem \ref{thm: intro 1}.
\begin{corollary}\label{cor: enumerative stuff}
For a fixed positive integer $a$,  let $\mathcal{A}:=\hyplin_{n-1,a}=\mathcal{A}^{a-1,a+1}_{n-1}$. Then the following hold.
\begin{enumerate}
\item $r(\mathcal{A})=R((a-1)n+2,L_{0,n})$ and $b(\mathcal{A})=R((a-1)n,L_{0,n})$.
\item $r(\mathcal{A})=r_{n-1}(L_{(a-1)n+2,n})$ and $b(\mathcal{A})=r_{n-1}(L_{(a-1)n,n})$.
\item $r(\mathcal{A})=|\ltree_{n,a+1}|$ and $b(\mathcal{A})=|\ltree_{n,a+1}^{b}|$.
\item $r(\mathcal{A})$ equals the number of sequences $(x_1,\ldots,x_{n-1})$ such that $1\leq x_i\leq an$ for $1\leq i\leq n-1$  and  $x_i+i\neq x_j+j$ for $1\leq i\neq j\leq n-1$; and $b(\mathcal{A})$ equals the number of sequences $(x_1,\ldots,x_{n-1})$ such that $1\leq x_i\leq an-2$ for $1\leq i\leq n-1$  and  $x_i+i\neq x_j+j$ for $1\leq i\neq j\leq n-1$.
\end{enumerate}
\end{corollary}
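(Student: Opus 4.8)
The plan is to assemble Corollary~\ref{cor: enumerative stuff} from the machinery already in place, using Zaslavsky's theorem (Theorem~\ref{thm: Zaslav}) as the bridge between the characteristic polynomial and the region counts, and Corollary~\ref{cor: relation factorial and characteristic} as the bridge between the characteristic polynomial and the factorial polynomial of the Linial board $L_{0,n}$. Since $\mathcal{A}=\hyplin_{n-1,a}$ lives in the $(n-1)$-dimensional space $V_{n-1}$ and is essential, Zaslavsky gives $r(\mathcal{A})=(-1)^{n-1}\chi_{n-1}^{a-1,a+1}(-1)$ and $b(\mathcal{A})=(-1)^{n-1}\chi_{n-1}^{a-1,a+1}(1)$.

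For part (1), I would simply plug $q=-1$ and $q=1$ into the first identity of Corollary~\ref{cor: relation factorial and characteristic}, namely $(-1)^{n-1}\chi_{n-1}^{a-1,a+1}(q)=R(1+(a-1)n-q,\,L_{0,n})$. Setting $q=-1$ yields $r(\mathcal{A})=R((a-1)n+2,L_{0,n})$ and setting $q=1$ yields $b(\mathcal{A})=R((a-1)n,L_{0,n})$. Part (2) then follows immediately because the factorial polynomial $R(t,L_{0,n})$ was constructed precisely so that its integer evaluations recover rook numbers: $R(m,L_{0,n})=r_{n-1}(L_{m,n})$ for every nonnegative integer $m$ (this is the defining property used in the proof of Theorem~\ref{thm: factorial rook polynomial Linial board}), and both $(a-1)n+2$ and $(a-1)n$ are nonnegative for $a\geq 1$.

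For part (3), I would invoke the bijection $\Psi$ and the discussion immediately preceding Theorem~\ref{thm: factorial rook polynomial Linial board}: maximal nonattacking rook placements on $L_{t,n}$ with $t=(a-1)n+2$ correspond under $\Psi$ to trees in $\ptree_{n,a+1}$ with no $1$-ascents and no $(a+1)$-descents, i.e. to $\ltree_{n,a+1}$, so $r(\mathcal{A})=|\ltree_{n,a+1}|$. For the bounded count, $t=(a-1)n$ corresponds to the rook-placement condition $i+1\le f(i)\le an-2+i$; translating the shrinking of the board on both the left and the right end through $\Psi$ adds exactly the two constraints that the vertex $1$ has no right child and the vertex $n$ has no left child, which is the definition of $\ltree_{n,a+1}^b$. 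I would spell out this translation of the board constraints carefully — identifying which loss of columns at the left edge forbids a right child at the minimum and which loss at the right edge forbids a left child at the maximum — since this is the one step that is not purely a citation. Part (4) is the function-theoretic restatement of part (2): an element of $\Func_{n,a+1}$ landing in the sub-board $L_{(a-1)n+2,n}$ is exactly an injective $f\colon[n-1]\to[(a+1)n]$ with $i+1\le f(i)\le an+i$; writing $x_i=f(i)-i$ rephrases injectivity as $x_i+i\ne x_j+j$ and the box constraints as $1\le x_i\le an$, giving the region count, and the analogous bookkeeping with $t=(a-1)n$ gives $1\le x_i\le an-2$ for the bounded count.

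The main obstacle, such as it is, lies in part (3): making the correspondence between the two extra box-constraints defining $L_{(a-1)n,n}$ (versus $L_{(a-1)n+2,n}$) and the two tree constraints defining $\ltree_{n,a+1}^b$ (versus $\ltree_{n,a+1}$) genuinely precise, rather than hand-waving ``shrink the board, lose a child.'' Concretely I expect to trace through $\Psi$ on the relevant edges: the leftmost column available to row $i$ controls whether $f(i)$ can equal a value forcing an edge into vertex $1$ as its right ($k$-th) child, and the rightmost available column controls edges making vertex $n$ a left ($1$st) child; the identities $r_{n-1}(L_{(a-1)n,n})=R((a-1)n,L_{0,n})=b(\mathcal{A})$ already force the counts to agree, so the real content is only to confirm the claimed combinatorial description is the right one, not merely equinumerous by accident.
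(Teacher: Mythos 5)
Parts (1), (2) and (4) of your plan coincide with the paper's proof (Zaslavsky plus Corollary \ref{cor: relation factorial and characteristic} for (1), the evaluation property of the factorial polynomial for (2), and left-justifying the board with $x_i=f(i)-i$ for (4)), and the first half of (3) is indeed just the discussion preceding Theorem \ref{thm: factorial rook polynomial Linial board}. The gap is exactly where you anticipated trouble: the bounded case of (3). First, a factual slip: lowering $t$ does not shrink the board on the left at all, since row $i$ of $L_{t,n}$ occupies columns $i+1$ through $n-2+t+i$, so passing from $L_{(a-1)n+2,n}$ to $L_{(a-1)n,n}$ trims two cells from the \emph{right} end of every row only. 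More seriously, the literal condition $i+1\le f(i)\le an+i-2$ does \emph{not} translate under $\Psi$ into ``$1$ has no right child and $n$ has no left child.'' The image of the placements on $L_{(a-1)n,n}$ under $\Psi$ is equinumerous with $\ltree_{n,a+1}^{b}$ but is a different subset of $\ltree_{n,a+1}$: already for $n=3$, $a=1$, the unique maximal placement on $L_{0,3}$ (rooks in cells $(1,2)$ and $(2,3)$) maps to the chain in which $1$ is the left child of $2$ and $2$ is the left child of $3$, so the vertex $3$ \emph{does} have a left child. Your plan of ``tracing the box constraints through $\Psi$ edge by edge'' therefore cannot produce the claimed description; no amount of care in that tracing will fix it, because the statement being traced is false.

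What the paper does instead is to change which two columns are deleted. All columns of $L_{(a-1)n+2,n}$ with indices $n,\dots,an+1$ have full height $n-1$, and deleting any two of them yields a board that is identical to $L_{(a-1)n,n}$ after an order-preserving relabeling of columns; hence the number of maximal nonattacking placements is unchanged. The paper chooses to forbid columns $n$ and $an+1$ precisely because, in the block picture, a rook in column $n$ means $g(i)=(n,1)$ for some $i$ (some vertex is the first child of $n$) and a rook in column $an+1$ means $g(i)=(1,a+1)$ (some vertex is the $(a+1)$-st child of $1$); since the reassembly step of $\Psi$ only replaces an edge into $b_i$ labeled $c_i$ by another edge into $b_i$ with the same label $c_i$, the set of occupied (parent, label) slots is preserved, so these two column conditions are equivalent to ``$n$ has a left child'' and ``$1$ has a right child'' in $\Psi(f)$. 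Forbidding both columns therefore carves out exactly $\ltree_{n,a+1}^{b}$. This column-exchange argument (rook-equivalence of the two ways of removing two columns, plus the specific choice $n$ and $an+1$) is the missing idea in your proposal; once it is supplied, the rest of your outline goes through as in the paper.
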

\begin{proof}
The first claim follows from plugging $q=\pm 1$ in Corollary \ref{cor: relation factorial and characteristic} and recalling Zaslavsky's result, that is, Theorem \ref{thm: Zaslav}. The second claim follows from the first by utilizing the relation between factorial polynomial and maximal nonattacking rook placements.

The discussion preceding Theorem \ref{thm: factorial rook polynomial Linial board} establishes that $r_{n-1}(L_{(a-1)n+2,n})=|\ltree_{n,a+1}|$ from which the first part of the third claim follows. To see how the second part follows, note that maximal nonattacking rook placements on $L_{(a-1)n,n}$ can be thought of maximal nonattacking rook placements on $L_{(a-1)n+2,n}$ where we forbid the rooks to belong to columns $n$ and $an+1$ (one can forbid any two columns of length $n-1$ in fact, but our choice leads to a nicer combinatorial interpretation for the number of bounded regions in extended Linial arrangements). Under the map $\Psi$, maximal nonattacking rook placements on $L_{(a-1)n+2,n}$ map to elements of $\ltree_{n,a+1}$. 
This given, the condition that we are forbidding rooks to belong to columns $n$ and $an+1$ translates to trees in $\ltree_{n,a+1}$ which satisfy the criterion that the node labeled $1$ does not have a right child and the node labeled $n$ does not have a left child. But these are precisely trees that belong to $\ltree_{n,a+1}^{b}$. This establishes the third claim.

The final claim follows easily once one left-justifies our Linial boards. The columns occupied by the rooks in the left-justified board give us our sequence $(x_1,\ldots, x_{n-1})$ (reading the rows from bottom to top) and the condition $x_i+i\neq x_j+j$ is tantamount to the nonattacking condition.\end{proof}
\section{Chromatic polynomials and Linial graphs}\label{sec: chromatic poly}
We will now associate a bipartite graph $G_{t,n}$ with the $t$-Linial board $L_{t,n}$. For fixed values of $t$ and $n$, consider two sets of vertices $A_1=\{v_1,\ldots,v_{2n-4+t}\}$ and $A_2=\{v_{2n-3+t},\ldots,v_{3n-5+t}\}$. Note that $|A_1|=2n-4+t$ and $|A_2|=n-1$. Now using these two sets we will construct our bipartite graph $G_{t,n}$ as follows. For $1\leq i\leq n-1$, we connect vertex $v_{2n-4+t+i}\in A_2$ to the vertices $v_i,v_{i+1},\ldots,v_{n-3+t+i}$ (all belonging to $A_1$). 

Note that all we are doing is labeling the rows of $L_{t,n}$ by elements of $A_2$ bottom to top and the columns of $L_{t,n}$ by elements of $A_1$ left to right and finally, drawing an edge for every square belonging to $L_{t,n}$. From this viewpoint, it is immediate that a nonattacking rook placement on $L_{t,n}$ corresponds to a matching on $G_{t,n}$. Recall that a \emph{matching} in a graph is a set of pairwise non-adjacent edges. Furthermore, a matching is said to be \emph{maximum} if it contains the largest possible number of edges.

The obvious relation between nonattacking rook placements and matchings gives us our next theorem.
\begin{theorem}\label{thm: rooks and matchings}
For a fixed positive integer $a$, let $\mathcal{A}:=\hyplin_{n-1,a}$. Then we have that 
\begin{align*}
r(\mathcal{A})&=\text{ number of maximum matchings in $G_{(a-1)n+2,n}$}\\
b(\mathcal{A})&= \text{ number of maximum matchings in $G_{(a-1)n,n}$}.
\end{align*}
\end{theorem}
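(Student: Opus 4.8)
The plan is to reduce Theorem~\ref{thm: rooks and matchings} to Corollary~\ref{cor: enumerative stuff} by the construction immediately preceding the statement. Recall that the bipartite graph $G_{t,n}$ was built so that its edges are in bijection with the cells of the $t$-Linial board $L_{t,n}$: rows of $L_{t,n}$ are labelled by the vertices $v_{2n-4+t+1},\ldots,v_{2n-4+t+(n-1)}$ of $A_2$, columns by $v_1,\ldots,v_{2n-4+t}$ of $A_1$, and a cell $(i,j)\in L_{t,n}$ becomes the edge $\{v_{2n-4+t+i},v_j\}$. Under this dictionary a set of cells forms a nonattacking rook placement exactly when no two chosen cells share a row or a column, which is precisely the condition that the corresponding edges form a matching in $G_{t,n}$.

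First I would make the bijection between rook placements and matchings explicit and observe that it restricts to a bijection between nonattacking rook placements using exactly $m$ rooks and matchings with exactly $m$ edges. Next I would argue that a \emph{maximal} nonattacking rook placement on $L_{t,n}$ (one rook in every one of the $n-1$ rows) corresponds to a matching that saturates all $n-1$ vertices of $A_2$, and that such matchings are precisely the maximum matchings of $G_{t,n}$. This last point is the only nontrivial step: one must verify that $G_{t,n}$ admits a matching of size $n-1$, i.e.\ that $A_2$ can be fully saturated. This follows because $r_{n-1}(L_{t,n})>0$ for the relevant values of $t$; concretely, for $t=(a-1)n+2$ the quantity $r_{n-1}(L_{(a-1)n+2,n})=|\ltree_{n,a+1}|=r(\mathcal{A})>0$, and for $t=(a-1)n$ it equals $b(\mathcal{A})$, which is positive as soon as the arrangement has a bounded region (and when it is zero the count of maximum matchings of the appropriate size is likewise zero, so the identity still holds trivially). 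Since $A_2$ has $n-1$ vertices, no matching can have more than $n-1$ edges, so once a matching of size $n-1$ exists, the maximum matchings are exactly the $A_2$-saturating ones, which correspond to the maximal nonattacking rook placements.

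Combining these observations, the number of maximum matchings in $G_{t,n}$ equals $r_{n-1}(L_{t,n})$. Specializing $t=(a-1)n+2$ and $t=(a-1)n$ and invoking parts~(1) and~(2) of Corollary~\ref{cor: enumerative stuff}, which give $r(\mathcal{A})=r_{n-1}(L_{(a-1)n+2,n})$ and $b(\mathcal{A})=r_{n-1}(L_{(a-1)n,n})$, yields the two displayed equalities. The main obstacle, such as it is, is the bookkeeping verification that ``maximal rook placement'' translates to ``maximum matching''—i.e.\ confirming the maximum matching size of $G_{t,n}$ is exactly $n-1$—but this is immediate once one notes $|A_2|=n-1$ together with the nonvanishing of $r_{n-1}(L_{t,n})$ established in the previous section; no genuinely new idea is required.
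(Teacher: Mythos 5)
Your argument is correct and follows essentially the same route as the paper, which obtains the theorem from the dictionary between cells of $L_{t,n}$ and edges of $G_{t,n}$ together with parts (1)--(2) of Corollary \ref{cor: enumerative stuff}; in fact you are more careful than the paper in verifying that the maximum matching size of $G_{t,n}$ is exactly $n-1$ (so that ``maximal rook placement'' really translates to ``maximum matching''), via the positivity of $r_{n-1}(L_{t,n})$. The one off note is your parenthetical about the case $b(\mathcal{A})=0$: if no matching of size $n-1$ existed, the maximum matchings would be matchings of some smaller size and their number need not be zero, so the identity would not hold ``trivially''—but this can only happen in the degenerate case $n=2$, $a=1$, where the paper's own statement already breaks down, so it does not affect the substance of your proof.
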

While the above theorem is not surprising given Corollary \ref{cor: enumerative stuff}, it is interesting to note that the complement of the graph $G_{t,n}$, denoted by $\overline{G_{t,n}}$, has some combinatorial value in the following sense: $$\text{The chromatic polynomial of $\overline{G_{t,n}}$ coincides with the $(3n-5+t)$-factorial polynomial of $L_{t,n}$.}$$ 
We explain this briefly next. Observe that $3n-5+t$ equals the sum of the number of rows and the number of columns in $L_{t,n}$, and is also equal to the total number of vertices in $\overline{G_{t,n}}$. Furthermore, it develops that, upto an interchange of rows and columns, the graph $G_{t,n}$ is obtained from a proper $(3n-5+t)$-board.  The result follows from invoking \cite[Theorem 2]{GJW-iii}. We refer the reader to \cite{GJW-iii} for the definition of proper boards and more details on the relation between factorial polynomials of proper boards and chromatic polynomials of associated graphs.

\begin{theorem}\label{thm: relation chromatic and characteristic polynomial}
Denote the chromatic polynomial of $\overline{G_{t,n}}$ by $c(x,\overline{G_{t,n}})$. Let $a$ be a fixed positive integer. Then
\begin{align*}
\frac{1}{(x)_{2n-4+t}}c(x,\overline{G_{t,n}})=(-1)^{n-1}\,\chi_{n-1}^{a-1,a+1}((a+1)n-3-x)
\end{align*}
\end{theorem}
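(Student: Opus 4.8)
The plan is to reduce everything to the two facts already assembled in the excerpt: the identity between the chromatic polynomial of $\overline{G_{t,n}}$ and a factorial polynomial of the Linial board, and Corollary \ref{cor: relation factorial and characteristic} relating the factorial polynomial of $L_{0,n}$ to the characteristic polynomial of the extended Linial arrangement. First I would record the identity stated (and sketched) just before the theorem, namely
\begin{align*}
c(x,\overline{G_{t,n}})=R_{3n-5+t}(x,L_{t,n}),
\end{align*}
where the right-hand side is the $(3n-5+t)$-factorial polynomial of $L_{t,n}$; this comes from \cite[Theorem 2]{GJW-iii} applied to the proper board obtained (up to interchanging rows and columns) from $G_{t,n}$, as explained in the paragraph preceding the statement.

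Next I would relate the $m$-factorial polynomial of $L_{t,n}$ for the inflated value $m=3n-5+t$ to the ordinary factorial polynomial $R(x,L_{0,n})$ (which has $m=n-1$). Two bookkeeping steps are involved. On the one hand, since $L_{t,n}$ differs from $L_{0,n}$ only by appending $t$ full columns of height $n-1$ on the right, one has $r_k(L_{t,n})=\sum_{j} r_j(L_{0,n})\cdot(\text{something})$; more cleanly, the $m$-factorial polynomial is independent of the choice of $m$ in the sense that $R_{m}(x,B)=R_{m'}(x,B)$ after the substitution reflecting the extra rows/columns — concretely $R_{3n-5+t}(x,L_{t,n})=(x)_{2n-4+t}\,R(x-(2n-4+t),L_{0,n})$, because adjoining $t$ columns of height $n-1$ multiplies the rook generating data exactly as adjoining rows to the board does in the $m$-factorial polynomial, while the remaining $2n-4+t$ ``empty'' rows of the proper board contribute the falling-factorial prefactor $(x)_{2n-4+t}$. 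I would verify this shift identity directly from the definition $R_m(x,B)=\sum_k r_k(B)(x)_{m-k}$ together with the observation that the rook numbers of $L_{t,n}$ and $L_{0,n}$ agree after the evident reindexing (this is the same computation used implicitly in Theorem \ref{thm: factorial rook polynomial Linial board}). Dividing by $(x)_{2n-4+t}$ then gives
\begin{align*}
\frac{1}{(x)_{2n-4+t}}\,c(x,\overline{G_{t,n}})=R(x-(2n-4+t),L_{0,n}).
\end{align*}

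Finally I would specialize $t=(a-1)n+2$, so that $2n-4+t=(a+1)n-2$, and invoke Corollary \ref{cor: relation factorial and characteristic}, which says $R(1+(a-1)n-q,L_{0,n})=(-1)^{n-1}\chi_{n-1}^{a-1,a+1}(q)$. Setting $q=(a+1)n-3-x$ gives $1+(a-1)n-q=1+(a-1)n-(a+1)n+3+x=x-(2n-4)=x-((a+1)n-2)+((a-1)n+2)-\!(a-1)n-2+\cdots$ — more carefully, $1+(a-1)n-q = x-(2n-4+t) $ exactly when $t=(a-1)n+2$, which is the matching we want. Substituting yields the claimed identity $\frac{1}{(x)_{2n-4+t}}c(x,\overline{G_{t,n}})=(-1)^{n-1}\chi_{n-1}^{a-1,a+1}((a+1)n-3-x)$.

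I expect the only real obstacle to be pinning down the shift identity for the $m$-factorial polynomial under adjoining $t$ full columns — i.e.\ justifying the exact prefactor $(x)_{2n-4+t}$ and the argument shift $x\mapsto x-(2n-4+t)$. This is a finite, essentially combinatorial computation: one must check that the rook numbers of $L_{t,n}$ as a proper $(3n-5+t)$-board unwind, via $R_m(x,B)=\sum_k r_k(B)(x)_{m-k}$, into the ordinary factorial polynomial of $L_{0,n}$ times a falling factorial coming from the $2n-4+t$ rows of the proper board that carry no cells of $L_{t,n}$. Once that is in hand, the arithmetic of substituting $t=(a-1)n+2$ and $q=(a+1)n-3-x$ into Corollary \ref{cor: relation factorial and characteristic} is routine, and everything else is already proved in the excerpt.
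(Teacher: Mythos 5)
Your overall route is the same as the paper's: apply \cite[Theorem 2]{GJW-iii} to get $c(x,\overline{G_{t,n}})=R_{3n-5+t}(x,L_{t,n})$, unwind the $m$-factorial polynomial into the ordinary factorial polynomial, and finish with Corollary \ref{cor: relation factorial and characteristic}. However, the shift identity you single out as the key step is misstated, and this is a genuine error rather than bookkeeping. From the definition, $R_{3n-5+t}(x,L_{t,n})=\sum_k r_k(L_{t,n})(x)_{3n-5+t-k}=(x)_{2n-4+t}\,R\bigl(x-(2n-4+t),L_{t,n}\bigr)$, where the board on the right-hand side is still $L_{t,n}$. To replace it by $L_{0,n}$ you need the separate fact $R(y,L_{t,n})=R(y+t,L_{0,n})$ (adjoining $t$ full-height columns shifts the argument up by $t$), which the paper invokes explicitly; combining the two gives $(x)_{2n-4+t}\,R(x-2n+4,L_{0,n})$, whose argument $x-2n+4$ is independent of $t$. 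Your formula $R(x-(2n-4+t),L_{0,n})$ is off by $t$: for instance, for $n=2$ one computes $R_{1+t}(x,L_{t,2})=x\,(x)_t$, whereas $(x)_t\,R(x-t,L_{0,2})=(x-t)(x)_t$.

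The error then propagates into your final substitution. With $q=(a+1)n-3-x$ one has $1+(a-1)n-q=x-2n+4$ for every $t$; this equals $x-(2n-4+t)$ only when $t=0$, so the claim that the matching works ``exactly when $t=(a-1)n+2$'' is arithmetically false. (The two mistakes happen to compensate, which is why you land on the stated identity, but neither intermediate equality holds as written, and the verification you yourself propose for the shift identity would expose the first one.) Moreover, specializing $t=(a-1)n+2$ would at best prove a special case: the theorem is asserted for arbitrary $t$ and arbitrary positive $a$. With the corrected chain no specialization is needed, since $\frac{1}{(x)_{2n-4+t}}c(x,\overline{G_{t,n}})=R(x-2n+4,L_{0,n})$ holds for all $t$, after which Corollary \ref{cor: relation factorial and characteristic} with $q=(a+1)n-3-x$ finishes the proof exactly as the paper does.
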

\begin{proof}
By \cite[Theorem 2]{GJW-iii} we know that 
\begin{align*}
c(x,\overline{G_{t,n}})=R_{3n-5+t}(x,L_{t,n})
\end{align*}
This given we have the following equalities.
\begin{align*}
c(x,\overline{G_{t,n}})&=
(x)_{2n-4+t}\left(\sum_{k=0}^{n-1}r_k(L_{t,n})(x-2n+4-t)_{n-1-k}\right)\\
&=
(x)_{2n-4+t}R(x-2n+4-t,L_{t,n})
\\&=
(x)_{2n-4+t}R(x-2n+4,L_{0,n})
\end{align*}
In the last step above, we have made use of the fact that $R(x,L_{t,n})=R(x+t,L_{0,n})$. Now from Corollary \ref{cor: relation factorial and characteristic}, we know that $R(x-2n+4,L_{0,n})=(-1)^{n-1}\chi_{n-1}^{a-1,a+1}((a+1)n-3-x)$, thereby establishing the claim.
\end{proof}
Thus, the content of Theorem \ref{thm: relation chromatic and characteristic polynomial} is that upto a certain normalizing factor, the chromatic polynomial of the graphs $\overline{G_{t,n}}$ coincides with the characteristic polynomial of extended Linial arrangements. In fact, the normalizing factor $(x)_{2n-4+t}$ can be thought of the chromatic polynomial of the complete graph on $2n-4+t$ vertices, that is, the complete graph on the vertices from the set $A_1$. Note that this complete graph is a subgraph of $\overline{G_{t,n}}$.

\begin{example}
\em{
Consider the case where $n=3$, $t=1$ and $a=1$. Drawn below is the Linial board $L_{1,3}$.
\begin{align*}
\ytableausetup{mathmode,boxsize=0.8em}
\begin{ytableau}
 \none & *(blue!50) & *(blue!50)\\
 *(blue!50)& *(blue!50)\\
\end{ytableau}
\end{align*}
The orange edges in the graph below on 5 vertices compose $G_{1,3}$, while the blue edges on the same set of vertices correspond to $\overline{G_{1,3}}$.
$$
\begin{tikzpicture}
\node[draw, circle,fill=red!30] (v1) at ({360/5 * (1 - 1)}:1.6cm) {$v_1$};
\node[draw, circle,fill=red!30] (v2) at ({360/5 * (2 - 1)}:1.6cm) {$v_2$};
\node[draw, circle,fill=red!30] (v3) at ({360/5 * (3- 1)}:1.6cm) {$v_3$};
\node[draw, circle,fill=green!40] (v4) at ({360/5 * (4 - 1)}:1.6cm) {$v_4$};
\node[draw, circle,fill=green!40] (v5) at ({360/5 * (5 - 1)}:1.6cm) {$v_5$};
\draw[blue,very thick] (v1)--(v2);
\draw[blue,very thick] (v1)--(v3);
\draw[blue,very thick] (v2)--(v3);
\draw[blue,very thick] (v4)--(v5);
\draw[blue,very thick] (v1)--(v5);
\draw[blue,very thick] (v4)--(v3);
\draw[orange, very thick] (v1)--(v4);
\draw[orange, very thick] (v2)--(v4);
\draw[orange, very thick] (v2)--(v5);
\draw[orange, very thick] (v3)--(v5);
\end{tikzpicture}
$$
Now, a computation reveals that $c(x,\overline{G_{1,3}})=x(x-1)(x-2)(x^2-3x+3)$. Additionally, we have that $\chi_{2}^{0,2}(x)=x^2-3x+3=\chi_{2}^{0,2}(3-x)$. We can now immediately verify that $\frac{1}{(x)_3}c(x,\overline{G_{1,3}})=\chi_{2}^{0,2}(3-x)$.}
\end{example}

\section{Final remarks}\label{sec:final}
We close with some remarks about future avenues and related work.
\begin{enumerate}
\item It is natural to ask if we can extend our results to the entire class of truncated affine arrangements; or even to subarrangements thereof.
The Shi($G$) arrangements \cite{Athanasiadis-Advances, Athanasiadis-Linusson} and the $G$-Shi arrangements \cite{Hopkins-Perkinson} have garnered plenty of attention in the recent past, and remain an active area of research. A rook-theoretic perspective on them might be of interest and lend new insight. Furthermore, it would be interesting to obtain $q$-analogs of our formulae and extend the results of Haglund-Remmel \cite{Haglund-Remmel} to our skew boards. Finally, it might be worthwhile to explore the relation to work of Ardila on Tutte polynomials associated with hyperplane arrangements \cite{Ardila}. In work in progress \cite{Griffin-Tewari}, we aim to answer some of the aforementioned  questions.

\item We will focus on the case $k=2$ of our bijection. We know that the number of maximal nonattacking rook placements on the skew Ferrers board $(2n-1,\ldots,n+1)/(n-1,\ldots,1)$ equals the number of local binary search trees  on $[n]$. Clearly, the set of such rook placements is equinumerous with the set of bijections $\{f: \mathbb{Z}\mapsto \mathbb{Z}\}$ satisfying the following conditions:
\begin{enumerate}
\item $f(i+n-1)=f(i)+(n-1),$
\item $i\leq f(i)\leq i+n-1$.
\end{enumerate}
It is worth remarking that this is a subset of conditions used to define \emph{bounded affine permutations} \cite{Knutson-Lam-Speyer} and we wonder if there is a stronger connection. Also, nonattacking rook placements on skew Ferrers boards have also been studied in \cite{Postnikov, Sjostrand} and it might be interesting to put our results in the context of the aforementioned articles.
\item If we consider the rook placements corresponding to increasing $k$-ary labeled rooted trees on $n$ nodes, we see that we are essentially counting maximal nonattacking rook placements on the Ferrers board of shape $(k(n-1),k(n-2),\ldots,k)$. Then it is easy to obtain the equality
\begin{align*}
|\iptree_{n,k}|=\prod_{i=1}^{n-1}(1+i(k-1)).
\end{align*}
Josuat-Verg\`es \cite{Josuat-Verges} studied the case $k=2$ using stammering tableaux and the language of growth diagrams. It would be interesting to explore the analogue of stammering tableaux for $k>2$.
\end{enumerate}
Finally, we would be remiss to not mention other work inspired by Gessel's original question, namely \cite{Corteel-Forge-Ventos,Forge}, and more recently \cite{Bernardi} which answers Gessel's questions from a different perspective than ours. Connections with the aforementioned work, especially \cite{Bernardi}, and ours remain to be explored.

\section*{Acknowledgements}
I owe a lot to Ira Gessel for his generosity in sharing his ideas, wisdom and notes, as well as his prompt and detailed responses to all my queries. I would also like to thank Joel Lewis and Alejandro Morales for an invaluable push in the right direction and helping me view my results in the right context; further thanks go to the former for helpful suggestions on improving exposition. I  have benefitted enormously from the many suggestions of Christos Athanasiadis, Sara Billey and Richard Stanley regarding avenues to pursue and pointers to relevant literature.
Finally, I would like to thank Sean Griffin, Brendon Rhoades, Jos\'e Samper, Andy Wilson, and Alex Woo for many enlightening discussions and numerous helpful comments.

\end{document}